\newcommand{\out}[1]{\ }
\newcommand{\DD}{{\mathbb D}}
\newcommand{\NN}{{\mathbb N}}
\newcommand{\QQ}{{\mathbb Q}}
\newcommand{\RR}{{\mathbb R}}
\newcommand{\CC}{{\mathbb C}}
\renewcommand{\ge}{\geqslant}
\renewcommand{\le}{\leqslant}
\renewcommand{\phi}{\varphi}
\renewcommand{\epsilon}{\varepsilon}
\renewcommand{\Re}{\re}
\renewcommand{\Im}{\im}
\newcommand{\eps}{\varepsilon}
\DeclareMathOperator{\re}{Re}
\DeclareMathOperator{\im}{Im}
\DeclareMathOperator{\inte}{int}
\DeclareMathOperator{\cl}{cl}
\newcommand{\fint}{\inte_f\!}
\newcommand{\fcl}{\cl_f\!}
\newtheorem*{rep@theorem}{\rep@title}
\newcommand{\newreptheorem}[2]{%
\newenvironment{rep#1}[1]{%
 \def\rep@title{#2 \ref{##1}}%
 \begin{rep@theorem}}%
 {\end{rep@theorem}}}
\newtheorem{theorem}{Theorem}[section]
\newtheorem{lemma}[theorem]{Lemma}
\newtheorem{prop}[theorem]{Proposition}
\theoremstyle{definition}
\newtheorem{definition}[theorem]{Definition}
\newtheorem{example}[theorem]{Example}
\newtheorem{question}[theorem]{Question}
\theoremstyle{remark}
\newtheorem{remark}[theorem]{Remark}
\numberwithin{equation}{section}
\newtheoremstyle{case}
{3pt}
  {3pt}
  {}
  {}
  {\bfseries}
  {:}
  {.5em}
  {}
\theoremstyle{case}
\numberwithin{subcase}{case}
\begin{document}
\title{Domains of existence for finely holomorphic functions}
\author{Bent Fuglede}
\address{Department of Mathematical Sciences\\
University of Copenhagen
\\Universitetsparken 5 
\\2100 K\o benhavn, \\Denmark}
\email{fuglede@math.ku.dk}
\author{Alan Groot}

\author{Jan Wiegerinck}
\address{KdV Institute for Mathematics
\\University of Amsterdam
\\Science Park 105-107
\\P.O. box 94248, 1090 GE Amsterdam
\\The Netherlands}\email{alangroot@gmail.com}
\email{j.j.o.o.wiegerinck@uva.nl}
\thanks{We are grateful to Jan van Mill for many enlightening discussions.}
\subjclass[2010]{30G12, 30A14, 31C40}
\keywords{finely holomorphic function, domain of existence}
\begin{abstract} We show that fine domains in $\CC$ with the property that they are  Euclidean $F_\sigma$ and $G_\delta$, are in fact fine domains of existence for finely holomorphic functions. Moreover \emph{regular} fine domains are also fine domains of existence. Next we show that fine domains such as $\CC\setminus \QQ$ or $\CC\setminus (\QQ\times i\QQ)$, more specifically fine domains $V$ with the properties that their complement contains a non-empty polar set $E$ that is of the first Baire category in its Euclidean closure $K$ and that $(K\setminus E)\subset V$, are \emph{not} fine domains of existence.

\end{abstract}
\maketitle

\section*{Introduction}
It was already known to Weierstrass that every domain $\Omega$ in $\CC$ is a \emph{domain of existence}, roughly speaking, it admits a holomorphic function $f$ that cannot be extended analytically at any boundary point of $\Omega$. In his thesis \cite{Bo1892} Borel showed, however, that it may be that $f$ can be (uniquely) extended to a strictly larger set $X$ in an "analytic" way, albeit that $X$ is no longer Euclidean open. This eventually led Borel to the introduction of his Cauchy domains and monogenic functions, cf.~\cite{Bo1917}. Finely holomorphic functions on fine domains in $\CC$ as introduced by the first named author, are the natural extension and setting for  Borel's ideas, see \cite{Fu81}, also for some historic remarks on Borel's work. In this paper we will study \emph{fine domains of existence}, roughly speaking,  fine domains in $\CC$ that admit a finely holomorphic function that cannot be extended as a finely holomorphic function at any fine boundary point. Definition \ref{Def.existence} contains a precise definition. Here the results are different from both the classical Weierstrass case of one variable and the classical several variable case, where Hartogs showed that there exist domains $D\subsetneq D^*$ in $\CC^n$ with the property that every holomorphic function on $D$ extends to the larger domain $D^*$.

 Our results are as follows. In Section \ref{sec2} we show that every fine domain that is a Euclidean $F_\sigma$ as well as a Euclidean $G_\delta$ is a fine domain of existence. Euclidean domains are of this form and therefore are fine domains of existence. We also show that \emph{regular} fine domains are fine domains of existence. Noting that on Euclidean domains holomorphic and finely holomorphic functions are the same, our Theorem \ref{Ufinecompactexhaustion} includes Weierstrass' theorem. In Section \ref{sec3}, however, we show that fine domains $V$ with the property that their complement contains a non-empty polar set $E$ that is of the first Baire category \emph{in its Euclidean closure}  $K$ (in particular, $E$ has no Euclidean isolated points) and $(K\setminus E)\subset V$, are \emph{not} fine domains of existence. 

 Pyrih showed in \cite{pyrih} that the unit disc is a fine  domain of existence, and that as a corollary of the proof, the same holds for simply connected Euclidean domains by the Riemann mapping theorem. 
 
The starting point of  our research was  the following observation. Edlund \cite{Edl} showed that  every closed  set $F\subset \CC$ admits a continuous function  $f:F\to \CC$ such that the graph of $f$ is completely pluripolar in $\CC^2$. It is easy to see that by construction this $f$ is finely holomorphic on the fine interior of  $F$.  In \cite{EEW} the main result can be phrased as follows: \emph{If a finely holomorphic function on a fine domain $D$ admits a finely holomorphic extension to a strictly larger fine domain $D'$, then the graph of $f$ over $D$ is not completely pluripolar. } Hence,  if $D$ is the fine interior of a Euclidean closed set, every fine component of $D$ is a fine domain of existence. This is extended by our Theorem \ref{Ufinecompactexhaustion} because the fine interior of any finely closed set is regular. 

In the next section we recall relevant results about the fine topology  and fine holomorphy.
\section{Preliminaries on the fine topology and finely holomorphic functions} 

Recall that a set $E\subset \CC$ is  \emph{thin} at $a\in \CC$ if $a\notin \overline E$ or else if there exists a subharmonic function $u$ on an open neighborhood of $a$ such that
\[\limsup_{z\to a, z\in E\setminus \{a\}}u(z)<u(a).\]
The fine topology was introduced  by H. Cartan in a letter to Brelot as the weakest topology that makes all subharmonic functions continuous. He pointed out that $E$ is thin at $a$ if and only if $a$ is not in the fine closure of $E$. The fine topology has the following known features. Finite sets are the only compact sets in the fine topology, which follows easily from the fact that every polar set $X\subset\CC$ is discrete in the fine topology, it consists of finely isolated points. The fine topology is Hausdorff, completely regular, Baire, and quasi--Lindel\"of, i.e. a union of finely open sets equals the union of a countable subfamily and a polar set, cf.~e.g.~\cite[Lemma71.2]{ArGa} . For our purposes fine connectedness is important.

Recall the following 
\begin{theorem}[Fuglede]\label{locallyconnected}
The fine topology on $\CC$ is \emph{locally connected}. That is, for any $a \in \CC$ and any fine neighborhood $U$ of $a$, there exists a finely connected finely open neighborhood $V$ of $a$ with $V \subset U$.
\end{theorem}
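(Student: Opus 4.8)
The plan is to derive the statement from the equivalent assertion that \emph{every fine connected component of a finely open set is finely open}. Indeed, given $a\in\CC$ and a fine neighbourhood $U$ of $a$, we may replace $U$ by its fine interior and let $V$ be the fine component of $U$ containing $a$; then $V$ is finely connected and contains $a$ automatically, and the only point is that $V$ is finely open. Now a set $W$ is finely open exactly when $\CC\setminus W$ is thin at each point of $W$; since $\CC\setminus U$ is thin at every point of $U$ (this being the meaning of fine openness of $U$) and a finite union of sets thin at a point is thin there, proving that $V$ is finely open amounts to proving that for every $z\in V$ the union $U\setminus V$ of all the \emph{other} fine components of $U$ is thin at $z$.

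Two preliminary reductions make this concrete. First, shrinking $U$ costs nothing, and thinness of $\CC\setminus U$ at $a$ is detected by a superharmonic function, so we may assume $U=\{z\in B:\ v(z)<\lambda\}$ with $B$ a Euclidean ball about $a$, $v\ge0$ superharmonic on $B$, and $v(a)<\lambda$; since superharmonic functions are finely continuous, this $U$ is finely open. Secondly, if $v$ can be taken finite and continuous then $U$ is Euclidean open, and the theorem follows from the classical fact that Euclidean domains are finely connected (which itself comes from quasi-continuity of finely continuous functions together with the facts that a nonempty finely open set has positive Lebesgue measure and that deleting an open set of small capacity does not disconnect a domain): the Euclidean component of $a$ in $U$ is then already a finely connected, finely open neighbourhood of $a$ inside $U$. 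In general, however, $v$ cannot be chosen continuous --- $\CC\setminus U$ near $a$ may have positive area, and then $U$ contains no Euclidean neighbourhood of $a$ at all --- so a purely topological argument is hopeless and the superharmonicity of $v$ must be used essentially.

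For the general case the plan is a balayage argument inside $B$. Note first that each individual fine component of $U$ other than $V$ is finely closed in $U$, hence (as $z\in U$ but $z$ is not in that component) thin at $z$; the difficulty is that $U\setminus V$ is a union of possibly uncountably many such components which may cluster at $z$ in the Euclidean sense, and a countable union of sets thin at a point need not be thin there, so nothing can be concluded topologically. Instead one exploits that all these components lie in the sublevel set $\{v<\lambda\}$: starting from the superharmonic function $\min(v,\lambda)$, one performs on $U\setminus V$ a balayage that raises it to the ceiling $\lambda$ there while respecting its fine boundary values, and aims to show that the resulting function $\widetilde v$ is superharmonic with $\{z\in B:\ \widetilde v(z)<\lambda\}=V$; being finely continuous, $\widetilde v$ then exhibits $V$ as finely open, and the reduction of the first paragraph finishes the proof. (An equivalent route is to show that the function equal to $1$ on $V$ and $0$ on $U\setminus V$ is bounded and finely harmonic on $U$, and to extend it across the polar exceptional set.) The step I expect to be the main obstacle is precisely the construction and verification of $\widetilde v$: the naive candidate --- $v$ on $V$ and the constant $\lambda$ on $B\setminus V$ --- is typically not even lower semicontinuous (at a Euclidean cluster point of $V$ belonging to another component, $v$ need not approach $\lambda$ along $V$), so the modification must be carried out by a genuine reduced-function construction, and checking that it yields a superharmonic function whose sublevel set is exactly $V$ --- equivalently, obtaining the required capacity estimate for $U\setminus V$ in small annuli about an arbitrary point of $V$, using the superharmonicity of $v$ --- is where the real work lies.
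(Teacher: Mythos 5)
The paper offers no proof of this theorem; it cites Fuglede's book and the elementary argument of El Marzguioui--Wiegerinck, so your attempt must stand on its own. Your reductions are all correct: it suffices to show that the fine component $V$ of $a$ in a basic neighbourhood $U=\{z\in B: v(z)<\lambda\}$ ($v$ superharmonic) is finely open, and since $\CC\setminus U$ is thin at each point of $U$ and a finite union of thin sets is thin, this amounts to showing that $U\setminus V$ is thin at each $z\in V$. You also rightly note that each single other fine component is thin at $z$ but that this gives nothing, since there may be uncountably many of them clustering at $z$.

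The gap is that your balayage step is a reformulation of the theorem, not a proof of it. The regularized reduced function (r\'eduite) $\widehat R$ of the constant $\lambda$ on $U\setminus V$ relative to $B$ is automatically superharmonic and equals $\lambda$ quasi-everywhere on $U\setminus V$; the assertion $\widehat R(z)<\lambda$ for $z\in V$ is, by the standard characterization of thinness via the r\'eduite, \emph{identical} with the assertion that $U\setminus V$ is thin at $z$. So the claim $\{\widetilde v<\lambda\}=V$ is exactly what is to be proved, and you supply no argument for it: nothing in your outline explains where the superharmonicity of $v$, or the fact that $V$ and $U\setminus V$ are mutually finely separated inside $U$, enters the ``capacity estimate in small annuli'' that you yourself acknowledge is the real work. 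That estimate is the entire content of the theorem. For comparison, Fuglede's proof derives the fine openness of fine components from the machinery of finely hyperharmonic functions (sheaf property and the fine Dirichlet problem), while El Marzguioui and Wiegerinck bypass all of that with a direct argument using sub-mean value inequalities of a subharmonic function over small circles to connect points of $\{h>0\}$ within one fine component. Your plan as written reaches neither; the central step remains unproven.
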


See \cite[p.92]{Fu} (or see \cite{ElMarzguioui2006} for a proof using only elementary properties of subharmonic functions).

In fact, the fine topology is even  \emph{locally polygonally arcwise connected} as was shown in \cite{Fu}, but we need something stronger.
\begin{definition}
 A \emph{wedge} is  a polygonal path consisting of two line segments $[a,w]$ and $[w,b]$ of equal length.  
\end{definition}

 The result we need is as follows, 
 
 \begin{theorem}[\cite{Fu80}] \label{wedgetheorem}Let $U$ be a finely open set in $\CC$ and $\alpha>1$. Then for every $w \in U$, there exists a fine neighborhood $V$ of $w$ such that any two distinct points $a, b \in V$ can be connected by a wedge contained in $U$ of total length less than $\alpha|a-b|$.
 \end{theorem}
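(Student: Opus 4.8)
The plan is to reduce, by a translation, to $w=0$, and then — replacing $U$ by $U\cap D(0,R)$, which only shrinks the candidate wedges and affects nothing in the conclusion — to assume $U\subset D(0,R)$. Then $E:=\CC\setminus U$ is finely closed with $0\notin E$, hence thin at $0$. I would introduce the superharmonic function $h$ on $D(0,R)$ formed from the capacitary potential (reduced function) $\hat R^1_{E\cap D(0,R)}$ by adding a logarithmic potential, finite at $0$, that is $+\infty$ on the polar exceptional set where $\hat R^1_{E\cap D(0,R)}<1$ on $E$; thus $0\le h$, $h\ge1$ on $E\cap D(0,R)$, and, since $E$ is thin at $0$, $h(0)$ can be made as small as we wish by taking $R$ small. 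Fixing a small $\delta>0$ and then a small $d>0$, I set
\[
V:=\{h<\delta\}\cap D(0,d),
\]
a finely open neighbourhood of $0$ with $V\subset U$. The whole problem then rests on controlling, uniformly for $a\in V$, the angular size of $E$ as seen from $a$.

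\emph{Geometric reduction.} Let $a,b\in V$ be distinct. A wedge of total length $<\alpha|a-b|$ joining $a$ and $b$ has its vertex $w'$ on the perpendicular bisector of $[a,b]$ with $|a-w'|<\tfrac\alpha2|a-b|$, and such vertices fill a segment $L$ of that bisector, of length $|a-b|\sqrt{\alpha^2-1}$. The wedge $[a,w']\cup[w',b]$ lies in $U$ unless $[a,w']$ or $[w',b]$ meets $E$; and $w'\mapsto\arg(w'-a)$ parametrises $L$ bi-Lipschitzly onto an arc of the unit circle, with constants depending only on $\alpha$. Consequently the set of vertices $w'\in L$ with $[a,w']\cap E\neq\emptyset$ has length at most $C(\alpha)\,|a-b|\,|\Theta_a|$, where
\[
\Theta_a:=\bigl\{\theta:[a,\,a+\tfrac\alpha2|a-b|e^{i\theta}]\cap E\neq\emptyset\bigr\},
\]
and similarly for $b$. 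So it is enough to guarantee $|\Theta_a|<\kappa(\alpha)$ for every $a\in V$, with $\kappa(\alpha)>0$ depending only on $\alpha$: the bad vertices then do not cover $L$, a good $w'$ exists, and the wedge it determines has total length $2|a-w'|<\alpha|a-b|$.

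\emph{Bounding the angular shadow.} With $r=\tfrac\alpha2|a-b|$, split $E\cap D(a,r)$ into the dyadic annuli $\{2^{-m-1}r\le|z-a|\le2^{-m}r\}$. The elementary fact that a compact set has orthogonal projection onto a line of length $\le4\,\Capa$ — used after rescaling each annulus to $\{\tfrac12\le|w|\le1\}$ and cutting it into finitely many sectors — yields
\[
|\Theta_a|\ \le\ C\sum_{m\ge0}\frac{\Capa\bigl(E\cap\{2^{-m-1}r\le|z-a|\le2^{-m}r\}\bigr)}{2^{-m}r},
\]
a Wiener-type series which is finite because $E$ is thin at $a$. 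Its uniform smallness is where $V$ does its work. The polar part of $E$ contributes nothing, a polar set having orthogonal (hence also angular) projection of measure zero. For a non-polar piece $P$ of $E$ near $0$ one has $h(a)\ge\hat R^1_P(a)$, and combining $h(a)<\delta$ with the asymptotics $\hat R^1_P(a)\approx\log(1/\operatorname{dist}(a,P))/\log(1/\Capa(P))$ forces $\operatorname{dist}(a,P)>\Capa(P)^{\delta}$; hence the angular shadow of $P$ from $a$ is $\le C\,\Capa(P)^{1-\delta}$. Summing over all non-polar pieces of $E$ in the dyadic annuli about $0$, and using that the planar Wiener criterion makes $\Capa(E\cap\{2^{-k-1}\le|z|\le2^{-k}\})/2^{-k}\to0$ rapidly, the total of these shadows is bounded by a convergent series whose tail is $<\kappa(\alpha)$ as soon as $d$ — hence the range of scales that can arise — is small. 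Thus $|\Theta_a|<\kappa(\alpha)$, as required.

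The step I expect to be the main obstacle is exactly this last uniform estimate. The sub–mean-value inequality for $h$ shows easily that $E$ meets each circle about $a$ in a set of small angular measure, and that $E\cap D(a,r)$ has small area; but neither of these prevents $E$ from meeting every short segment issued from $a$. One therefore genuinely needs the capacity/projection inequality together with the feature — encoded in the choice $V=\{h<\delta\}$ — that $V$ stays at a definite, scale-dependent distance from the thick parts of $E$, and the bookkeeping of the pieces of $E$ across all dyadic scales below $d$ (so that the resulting Wiener/geometric series stays under the fixed margin $\kappa(\alpha)$) is the technical heart of the argument; it is here that the planar Wiener criterion is used in an essential way, and this is what is carried out in the cited paper.
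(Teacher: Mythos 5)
The paper does not prove this statement; it is quoted from \cite{Fu80}, with the remark that Lyons's argument in \cite{Lyons1980} essentially contains it. So there is no internal proof to compare against, but your strategy is in fact the standard one from that literature: reduce to bounding, uniformly for $a$ in a sublevel set $V=\{h<\delta\}\cap B(0,d)$ of a capacitary potential, the angular measure $|\Theta_a|$ of the directions in which a short segment from $a$ meets $E=\CC\setminus U$, using the contraction principle (orthogonal projection onto a line has length at most $4\Capa$, since projections decrease capacity and intervals minimize capacity among linear sets of given length) and a Wiener-type series over dyadic annuli. Your geometric reduction (vertices on the perpendicular bisector, bi-Lipschitz parametrization of the vertex segment $L$ by $\arg(w'-a)$ with constants depending only on $\alpha$, measure of bad vertices at most $C(\alpha)|a-b|\,|\Theta_a|$ against $|L|=\sqrt{\alpha^2-1}\,|a-b|$) is correct.

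The step that fails as written is the uniformity argument. The asymptotics $\hat R^1_P(a)\approx \log(1/\operatorname{dist}(a,P))/\log(1/\Capa(P))$, and the conclusion $\operatorname{dist}(a,P)>\Capa(P)^{\delta}$, are false in general: what one can prove is $\hat R^1_P(a)\ge \mu(P)\min_{\zeta\in P}G(a,\zeta)\ge \log(R/s)/\log(2R/\Capa(P))$ whenever $P\subset\overline{B(a,s)}$ (with $G$ the Green function of $B(a,R)$ and $\mu$ the condenser equilibrium measure, whose mass is at least $1/\log(2R/\Capa(P))$). Thus $h(a)<\delta$ only prevents $P$ from lying \emph{entirely} within distance roughly $\Capa(P)^{\delta}$ of $a$; a set of tiny capacity may still contain points arbitrarily close to $a$ while its equilibrium potential at $a$ stays small, so neither the claimed distance bound nor the shadow bound $C\,\Capa(P)^{1-\delta}$ follows, and decomposing $E$ into annuli about $0$ rather than about $a$ compounds the difficulty. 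The repair is standard and uses precisely the series you displayed: work with the dyadic annuli $A_m(a)=\{2^{-m-1}r\le|z-a|\le 2^{-m}r\}$ centred at $a$. Applying the inequality above to $Q_m=E\cap A_m(a)\subset\overline{B(a,2^{-m}r)}$, together with $h(a)\ge\hat R^1_{Q_m}(a)$ and $h(a)<\delta$, gives $\Capa(Q_m)\le 2R\,(2^{-m}r/R)^{1/\delta}$, whence
\[
\sum_{m\ge0}\frac{\Capa(Q_m)}{2^{-m}r}\ \le\ C_\delta\,R^{1-1/\delta}\,r^{1/\delta-1},
\]
which is uniformly less than $\kappa(\alpha)$ for all $a\in V$ and all relevant $r\le\alpha d$ once $\delta<1$ and $d$ are chosen small. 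With that substitution (and the routine preliminary step of shrinking $U$ to a Borel finely open neighbourhood of $w$ so that the capacities and projections involved are well defined), your argument closes.
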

 
 Lyons, see \cite[p. 16]{Lyons1980} had already proven that  $a$ and $b$ can be connected by a  polygonal path consisting of two line segments of total length less than $\alpha |a-b|$ and his proof essentially contained Theorem \ref{wedgetheorem}.
 For an even stronger result see Gardiner, \cite[Theorem A]{Gard}. 

We will also need the following elementary lemma. Let $B(x,r)$ denote the open disc with radius $r$ about $x\in \CC$, $\overline B(x,r)$ its closure, and let $C(x,r)$ denote its boundary. 
\begin{lemma} \label{Lem1} Let $V$ be a fine neighborhood of $a\in\CC$.  Then there exists $C_0>1$ such that for $C>C_0$ and every  $n\in \NN$ there exists $t\in [C^{-n-1}, C^{-n}]$ with  $C(a, t)\subset V$.
\end{lemma}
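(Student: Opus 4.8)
The plan is to exploit the fact that a polar set — here the relevant obstruction — is negligible for circles in the following averaged sense: if $C(a,t)$ failed to lie in $V$ for \emph{every} $t$ in a whole dyadic-type annulus $[C^{-n-1},C^{-n}]$, then the complement $\CC\setminus V$ would be thick near $a$ along too many circles, contradicting that $\CC\setminus V$ is thin at $a$. Concretely, since $V$ is a fine neighborhood of $a$, the set $E:=\CC\setminus V$ is thin at $a$, so there is a superharmonic (or, taking $-u$, subharmonic) potential-type function, or more directly we may use the classical Wiener-type criterion: $E$ is thin at $a$ if and only if $\sum_k \frac{k}{\log(1/r_k)}\,\widehat{R}$-capacity terms converge, equivalently $\int_0 \frac{\text{something}}{t}\,dt<\infty$. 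The cleanest route is via the \emph{fine topology characterization}: because $E$ is thin at $a$, for a suitable subharmonic $u$ near $a$ we have $\limsup_{z\to a,\,z\in E\setminus\{a\}} u(z) < u(a)$; fix $\eta>0$ with $u(z)\le u(a)-2\eta$ on $E\cap B(a,\rho)\setminus\{a\}$ for small $\rho$.

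Next I would argue by contradiction along a single scale $n$. Suppose that for some large $C$ and some $n$ we have, for \emph{every} $t\in[C^{-n-1},C^{-n}]$, that $C(a,t)\cap E\neq\emptyset$. Pick $z_t\in C(a,t)\cap E$ for each such $t$. As $n$ ranges over large integers (or, for the fixed $n$ in question, by letting $C\to\infty$ so that $C^{-n}\to 0$), the points $z_t$ approach $a$, so eventually they lie in $B(a,\rho)$, whence $u(z_t)\le u(a)-2\eta$. Now apply the sub-mean-value / circle-average inequality for the subharmonic function $u$ on the circle $C(a,t)$: one has $u(a)\le \frac{1}{2\pi}\int_0^{2\pi} u(a+te^{i\theta})\,d\theta$. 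The point is that $u$ is only slightly above $u(a)-2\eta$ in measure unless it is very large somewhere; but a subharmonic function near $a$ is bounded above, so $u\le M$ on $\overline B(a,\rho)$, and then the circle average being $\ge u(a)$ forces the set $\{\theta: u(a+te^{i\theta})\ge u(a)-\eta\}$ to have measure at least some fixed $\delta_0>0$ \emph{for every such $t$}. Letting $F=\{z\in B(a,\rho)\setminus\{a\}: u(z)\ge u(a)-\eta\}$, this says $F$ meets a positive ($\ge\delta_0$) proportion of every circle $C(a,t)$, $t$ in the annulus — hence $F$ is \emph{not} thin at $a$ (it has positive lower "angular density"). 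But $F$ is disjoint from $E$ only insofar as... actually the contradiction is more direct: I would instead choose $u$ to be a \emph{potential} $p=U^\mu$ of the swept-out measure so that $p=+\infty$ quasi-everywhere on $E$ near $a$ while $p(a)<\infty$; then on each circle $C(a,t)$ hitting $E$ the average of $p$ is forced up, and $\int p(a+te^{i\theta})\,d\theta$ integrated in $t\,dt$ over the annulus would blow up, contradicting local integrability of $p$.

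So the cleanest execution: since $E$ is thin at $a$ there is a superharmonic function $p\ge 0$ on $B(a,\rho)$ with $p(a)<\infty$ and $\liminf_{z\to a,\,z\in E} p(z)=+\infty$ (standard; e.g.\ a suitable Green or logarithmic potential of the balayage of $\varepsilon_a$ onto $E$). Fix $M$ with $p(a)<M<\infty$. There is $\rho'\le\rho$ such that $p\ge 4M$ on $E\cap B(a,\rho')\setminus\{a\}$. Choose $C_0$ so large that $C^{-1}\le\rho'$ for $C>C_0$; then for $C>C_0$ and every $n\ge 1$ the annulus $A_n=\{C^{-n-1}\le|z-a|\le C^{-n}\}$ sits inside $B(a,\rho')$. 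Suppose, for contradiction, that for some such $C,n$ we have $C(a,t)\not\subset V$ for every $t\in[C^{-n-1},C^{-n}]$, i.e.\ $C(a,t)\cap E\neq\emptyset$ for all such $t$. By lower semicontinuity of $p$ and the superharmonic circle-average inequality $p(a)\ge\frac{1}{2\pi}\int_0^{2\pi}p(a+te^{i\theta})\,d\theta$ — wait, superharmonic gives $p(a)\ge$ average, which is the wrong direction; hence I should run the argument with $u=-p$ subharmonic, $u(a)>-M$, $u\le-4M$ on $E\cap A_n$, and use $u(a)\le\frac1{2\pi}\int u(a+te^{i\theta})\,d\theta$. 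Since $u\le 0$ and equals $\le -4M$ on a set of positive circular measure (because $C(a,t)$ meets $E$, and near such a point $u$ stays below $-3M$ on an arc of length bounded below by continuity of... no — $E$ need not be open). This subtlety — that $E\cap C(a,t)$ could be a single point, contributing nothing to the average — is the main obstacle, and it signals that the honest proof must use the quantitative Wiener/Choquet capacity criterion for thinness: $\sum_n 2^n\,\mathrm{Cap}(E\cap A_n')<\infty$ where $A_n'$ are dyadic annuli. If instead every dyadic annulus $A_n$ had $E$ meeting every concentric circle, one shows $\mathrm{Cap}(E\cap A_n)\ge c\cdot(\text{inner radius})$, forcing the Wiener sum to diverge and contradicting thinness of $E$ at $a$; this yields the existence of a "free" radius $t\in[C^{-n-1},C^{-n}]$ with $C(a,t)\subset V$, for all $n$ once $C$ is large enough. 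I expect the capacity lower bound "a compact set meeting every circle about $a$ in an annulus has capacity at least a constant times the radius" to be the technical heart, provable by projecting onto a radius or by an extremal-length/energy estimate.
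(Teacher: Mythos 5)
Your final strategy is in substance the paper's own: the paper simply cites Ransford, Theorem 5.4.2, which is precisely the integrated Wiener-type estimate you are trying to reconstruct --- for a Borel set $F$ thin at $0$, the set $E=\{s\in(0,r):C(0,s)\cap F\ne\emptyset\}$ of bad radii satisfies $\int_E dx/x<\Gamma<\infty$, after which one only notes that $\int_{C^{-n-1}}^{C^{-n}}dx/x=\log C>\Gamma$ once $C>e^{\Gamma}$, so no interval $[C^{-n-1},C^{-n}]$ can consist entirely of bad radii. You also correctly diagnose why the naive circle-average argument with the defining subharmonic function fails (a single point of the complement on a circle contributes nothing to the average), and your repair --- a capacity lower bound for sets meeting every concentric circle in an annulus, fed into a Wiener-type criterion --- is the right idea.

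As written, though, the proof has three genuine gaps. First, the step you yourself call the technical heart is left as a conjecture: you need the classical fact that circular projection $z\mapsto|z-a|$ does not increase logarithmic capacity, so that a set meeting every circle $C(a,t)$, $t\in[\rho/C,\rho]$, has capacity at least that of a segment of length $\rho(1-1/C)$, namely $\rho(1-1/C)/4$. Relatedly, the Wiener series you quote, $\sum_n 2^n\,\mathrm{Cap}(E\cap A_n')$, is the criterion for $\RR^3$; in the plane it is $\sum_n n/\log(1/c_n)$ with $c_n$ the capacity of $E$ in the $n$-th dyadic annulus, and it is with this form that $c_n\ge c\,2^{-n}$ forces divergence. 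Second, for an arbitrary fine neighborhood $V$ the set $\CC\setminus V$ need not be Borel, so its capacities are undefined without passing to outer capacity; the paper sidesteps this by first shrinking $V$ to a basic neighborhood $W=\{h\ge 1/2\}$ whose complement in $B(0,r)$ is Euclidean open (hence $F_\sigma$), and your argument needs some such reduction. Third, divergence of the Wiener series only shows that at most finitely many dyadic annuli are ``fully bad''; to conclude for \emph{every} $n$ you must still choose $C_0$ so large that each interval $[C^{-n-1},C^{-n}]$ lies below the last bad scale and contains a full dyadic interval --- a short but necessary bookkeeping step that the proposal omits.
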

\begin{proof} We can assume $a=0$.  A  local basis at $0$ for the  fine topology consists of the sets $B(0,r, h)= \{z\in B(0,r) :  h(z)>0\}$,
where $r>0$, $h$ is subharmonic on $B(0,r)$ and  $h(0)=1$. See \cite[Lemma 3.1]{ElMarzguioui2006} for a proof. Thus for some $h$ and $r$ we have  $V\supset W:=\{z\in B(0,r): h(z)\ge 1/2 \}$ and  $B(0,r)\setminus V$ is contained in $F:=B(0,r)\setminus W$, which is Euclidean open, hence an $F_\sigma$, that is thin at $0$, because $W$ contains the finely open set $B(0,r,h)$ which contains $0$. 
Theorem 5.4.2 in \cite{Ran} states for given $r>0$ that there is a constant $\Gamma>0$ such that
\[\int_E \frac1x dx<\Gamma<\infty,\]
where 
\[E:=\{s:0<s<r\mid \ \exists \theta \text{\ with\ } s e^{i\theta}\in F\}.\]
Let  $C_0=\max\{e^{\Gamma}, 1/r\}$. For $C>C_0$ each interval of the form  $[C^{-n-1},C^{-n}]$ is contained in $(0,r)$, but can not be contained in $E$. Hence there exists $t\in [C^{-n-1},C^{-n}]$ such that $C(0,t)\cap F=\emptyset$, that is, $C(0,t)\subset W\subset V$.
\end{proof}

For  more  information on the fine topology see \cite[Part I, Chapter XI]{Doob},
\cite[Chapter VII]{ArGa}.
\medbreak

There are several equivalent definitions for finely holomorphic functions on a fine domain in $D\subset\CC$, cf.~\cite{Fu8081, Fu81, Fu88, Lyons1980}. 

\begin{definition} A function $f$ on a fine domain $D$ is called finely differentiable at $z_0\in D$ with fine complex derivative $f'(z_0)$ if there exists $f'(z_0)\in\CC$ such that for every $\eps>0$ there exists a fine neighborhood $V\subset D$ of  $z_0$ such that 
\[\left|\frac{f(z)-f(z_0)}{z-z_0}-f'(z_0)\right|<\eps \quad\text{for all } z\in V.\]
In other words, the limit $(f(z)-f(z_0))/(z-z_0)$ exists as $z-z_0\to 0$ finely.

The function $f$ is finely holomorphic on $D$ if and only if $f$ is finely differentiable  at every point of  $D$ and $f'$ is finely continuous  on $D$.
\end{definition}
We will use the following characterizations of fine holomorphy. 
\begin{theorem}[\cite{Fu81}] \label{finehol} Let $f$ be a complex valued function on a fine domain $D$. The following are equivalent
\begin{enumerate}
\item The function $f$ is finely holomorphic on $D$.
\item  Every point $z_0\in D$ admits a fine neighborhood  $V\subset D$ such that $f$ is a uniform limit of rational functions on $V$. 
\item The functions $f$ and $z\mapsto zf(z)$ are both (complex valued) finely harmonic functions on $D$.
\end{enumerate}

\end{theorem}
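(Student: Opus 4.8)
The three conditions are most naturally linked in the cycle $(2)\Rightarrow(1)\Rightarrow(3)\Rightarrow(2)$. The first two links are comparatively soft and rely only on tools already in hand---Cauchy's formula together with Lemma~\ref{Lem1}, and an elementary distributional identity---whereas the return $(3)\Rightarrow(2)$, recovering local rational approximation from the mere fine harmonicity of $f$ and $zf$, is where the substance of the theorem lies.

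For $(2)\Rightarrow(1)$ the plan is to localize Cauchy's formula to the circles produced by Lemma~\ref{Lem1}. Fix $z_0\in D$ and a fine neighborhood $V\subset D$ on which rational functions $R_n$ converge uniformly to $f$; since $z_0\in V$ these $R_n$ are finite at $z_0$ and $R_n(z_0)\to f(z_0)$ at once. Lemma~\ref{Lem1} provides, at every sufficiently small scale, a radius $t$ with $C(z_0,t)\subset V$, and on each such circle $R_n\to f$ uniformly. The goal is to show that the ordinary derivatives $R_n'(z_0)$ form a Cauchy sequence and that their limit serves as the fine derivative of $f$ at $z_0$; the estimates are the usual Cauchy estimates on these circles. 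The only subtlety is that a given approximant may carry finitely many poles in the complement $\CC\setminus V$, which is thin at $z_0$, and these must be separated from $z_0$ by comparing values on nested circles $C(z_0,t_1)\subset C(z_0,t_0)\subset V$ and invoking the maximum principle on the pole-free part. Letting $z_0$ range over $D$ while keeping the estimates uniform then delivers fine continuity of $f'$, so that (1) holds.

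The implication $(1)\Rightarrow(3)$ rests on the fine Cauchy--Riemann equation together with a distributional identity. A finely differentiable function is in particular finely continuous, and fine complex differentiability says precisely that $f$ has, in the fine sense, vanishing $\bar\partial$-derivative. Writing $\Delta=4\,\partial\bar\partial$ one computes $\Delta(zf)=4\,\bar\partial f+z\,\Delta f$, so that $\bar\partial f=0$ forces both $\Delta f=0$ and $\Delta(zf)=0$. Interpreting these relations in the theory of finely harmonic functions---where a finely $C^1$ function annihilated by the fine Laplacian is finely harmonic---shows that $f$ and $z\mapsto zf(z)$ are both finely harmonic. The genuine content here is the verification that fine complex differentiability with finely continuous derivative does produce a finely harmonic function, which uses that such an $f$ is near each point well approximated to first order by an affine holomorphic function, together with the representation of finely harmonic functions as local uniform limits of ordinary harmonic functions.

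The heart of the theorem is $(3)\Rightarrow(2)$, and this is where I expect the main difficulty. The data are that $f$ and $zf$ are finely harmonic; the target is a uniform approximation of $f$ by rational functions on a fine neighborhood of each point. The plan is to invoke the representation theorem for finely harmonic functions in the plane: near $z_0$ there is a fine neighborhood on which $f$ is a uniform limit of functions $h_n$ harmonic on shrinking Euclidean open sets, and likewise $zf$ is a uniform limit of harmonic $g_n$. The relation $g_n\approx zh_n$ then says that the $h_n$ are, in the limit, annihilated by $\bar\partial$, i.e.\ nearly holomorphic, and the task is to convert this approximate holomorphy into genuine rational approximants. This is carried out by solving the associated $\bar\partial$-problem with bounds governed by analytic capacity and appealing to a Vitushkin-type rational approximation theorem, the error terms being absorbed because the complement is thin. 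Theorem~\ref{wedgetheorem} enters to localize the construction: it joins points of the fine neighborhood by short wedges inside the fine domain, which is what lets the local holomorphic pieces be patched into a single rational approximation. The principal obstacle is precisely this conversion of approximate, capacity-controlled holomorphy into uniform rational approximation, which draws on the full strength of fine potential theory---balayage, reduced functions, and the quasi-Lindel\"of property---rather than on soft topology alone.
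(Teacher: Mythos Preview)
The paper does not contain a proof of this theorem. It is stated in the preliminaries (Section~1) as a known characterization, with the citation \cite{Fu81} attached to the theorem header, and is then used as a black box (for instance in the proof of Theorem~\ref{Riemann} and in Proposition~\ref{prop2}). There is therefore no ``paper's own proof'' against which to compare your proposal; the equivalence is imported from Fuglede's earlier work.

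That said, your sketch has genuine gaps that would need to be addressed before it could stand as a proof. In $(2)\Rightarrow(1)$, the circles $C(z_0,t)\subset V$ supplied by Lemma~\ref{Lem1} do \emph{not} bound discs contained in $V$, so Cauchy's integral formula for $R_n'(z_0)$ is unavailable: the approximants $R_n$ may have poles inside $B(z_0,t)$, and for varying $n$ these poles may approach $z_0$. Your remedy (``separated from $z_0$ by comparing values on nested circles $C(z_0,t_1)\subset C(z_0,t_0)$ and invoking the maximum principle on the pole-free part'') does not work, since there is no reason the annulus, let alone the smaller disc, is pole-free. The actual argument in \cite{Fu8081,Fu81} proceeds quite differently and does not rely on a Cauchy-type formula over circles in $V$. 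Likewise, your invocation of Theorem~\ref{wedgetheorem} in $(3)\Rightarrow(2)$ is unmotivated: wedge connectivity of fine domains plays no role in the rational approximation step, which in Fuglede's treatment goes through Debiard--Gaveau's characterization via $H^\infty$/$R(K)$ and balayage, not through path-joining. Finally, the $(1)\Rightarrow(3)$ paragraph conflates several notions: fine harmonicity is not defined via a ``fine Laplacian'' annihilating finely $C^1$ functions, and the distributional identity $\Delta(zf)=4\bar\partial f+z\Delta f$ has no direct meaning for merely finely differentiable $f$.
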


In the following theorem we collect properties of finely holomorphic functions that indicate how much this theory resembles classical function theory.

\begin{theorem}[\cite{Fu8081}] Let $f: D\to \CC$ be finely holomorphic. Then
\begin{enumerate}
\item  The function $f$ has fine derivatives $f^{(k)}$ of all orders $k$, and these are finely holomorphic on $D$.
\item  Every point $z_0\in D$ admits a fine neighborhood $V\subset D$ such that for every $m=0,1,2,\ldots$ 
\begin{equation} \label{Tay}
\left|f(w)-\sum_{k=0}^{m-1}\frac{f^{(k)}(z)}{k!}(w-z)^k\right|\big/ |w-z|^m
\end{equation} is bounded  on $V\times V$ for $z\ne w$.
\item At any point $z$ of $D$ the Taylor expansion  \eqref{Tay} uniquely determines $f$ on $D$. (If all coefficients equal 0, then $f$ is identically $0$.)
\end{enumerate}
\end{theorem}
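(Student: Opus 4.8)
The plan is to derive all three statements from the rational-approximation characterization, Theorem~\ref{finehol}(2), using Lemma~\ref{Lem1} to produce Euclidean circles inside a fine neighborhood and the wedge theorem (Theorem~\ref{wedgetheorem}) to upgrade pointwise estimates to uniform ones. Fix $z_0\in D$. By Theorem~\ref{finehol}(2) choose a bounded fine neighborhood $V_0\subset D$ of $z_0$ on which $f=\lim_j R_j$ uniformly, the $R_j$ rational with poles in $\CC\setminus V_0$.

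For (1)--(2) I would use a \emph{fine Cauchy transform}. By Lemma~\ref{Lem1} there are arbitrarily small $t$ with $C(z_0,t)\subset V_0$; on such a circle $R_j\to f$ uniformly, so the classical Cauchy transforms $H_j(z)=\frac1{2\pi i}\int_{C(z_0,t)}R_j(\zeta)(\zeta-z)^{-1}\,d\zeta$ converge locally uniformly on $B(z_0,t)$ to a function $g_t$ holomorphic there, and by the residue theorem $H_j$ is exactly the holomorphic part of $R_j$ in $B(z_0,t)$, so $R_j-H_j$ is rational with poles confined to $B(z_0,t)\cap(\CC\setminus V_0)$, a set thin at $z_0$. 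One then shows that, on a fine neighborhood of $z_0$, the remainder $f-g_t$ (the uniform limit of these singular parts) vanishes at $z_0$ to an order that tends to $\infty$ as $t\downarrow0$; reading off the Taylor polynomial of the holomorphic main part $g_t$ produces the candidates for $f^{(k)}(z_0)$, shows they are independent of $t$, and yields the estimate \eqref{Tay} at the single base point $z_0$. The wedge theorem then transports this to all base points: connecting $z$ to $w$ by a wedge in $V_0$ of length comparable to $|z-w|$ and comparing the local holomorphic main parts along the two segments converts the remainder estimate at interior points of the wedge into \eqref{Tay} on $V\times V$ for every $m$; in particular each $f^{(k)}$ is finely continuous, hence finely holomorphic by Theorem~\ref{finehol}, and agrees with the fine derivative of $f^{(k-1)}$.

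The main obstacle is the control of the singular parts $R_j-H_j$: a priori the poles of the $R_j$ accumulate at $z_0$ from within $\CC\setminus V_0$, so the approximants have no common Euclidean domain of holomorphy, and ``small on $V_0$'' does not translate into ``small on a Euclidean disc''. Overcoming this requires quantifying the thinness of $\CC\setminus V_0$ at $z_0$ exactly as in the proof of Lemma~\ref{Lem1} (the relevant set has finite logarithmic size), so that a rational function which is small on $V_0$ and whose Cauchy part over $C(z_0,t)$ is small is itself small near $z_0$ to a comparably high order; this is where potential theory — thinness, the fine maximum principle, and the auxiliary condition that $z\mapsto zf(z)$ be finely harmonic, Theorem~\ref{finehol}(3) — genuinely enters, and it is the technical heart of the argument.

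Statement (3) then follows formally. If all fine Taylor coefficients of $f$ at $z_0$ vanish, then each $f^{(k)}$ is finely holomorphic with vanishing Taylor coefficients at $z_0$; the usual factorization (a finely holomorphic $h$ with $h(z_0)=0$ but $h\not\equiv0$ near $z_0$ has the form $h(z)=(z-z_0)^m h_1(z)$ with $h_1$ finely holomorphic and $h_1(z_0)\ne0$, so $h$ is nonzero on a punctured fine neighborhood) forces every $f^{(k)}$ to vanish on a fine neighborhood of $z_0$. Hence $Z=\{z\in D: f^{(k)}(z)=0\ \text{for all }k\}$ is finely open, and it is finely closed by the fine continuity of the $f^{(k)}$; since $Z\ni z_0$ and $D$ is finely connected, $Z=D$ and $f\equiv0$.
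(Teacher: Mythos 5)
First, note that the paper offers no proof of this statement at all: it is quoted verbatim from Fuglede \cite{Fu8081} as background, so there is no in-paper argument to compare yours against; what must be judged is whether your sketch would stand on its own as a proof of Fuglede's theorem. It would not, for two concrete reasons. For (1)--(2), the entire load-bearing step is the claim that the limit of the singular parts $R_j-H_j$ vanishes at $z_0$ ``to an order that tends to $\infty$ as $t\downarrow 0$,'' and you explicitly defer this (``one then shows\ldots,'' ``this is the technical heart''). Nothing in Lemma \ref{Lem1} or in the logarithmic-measure estimate behind it gives you that a rational function that is small on the fine neighborhood $V_0$ and has small Cauchy part over $C(z_0,t)$ is small to high order near $z_0$: smallness on $V_0$ controls nothing on the thin exceptional set where the poles sit, and the poles may accumulate at $z_0$. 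Fuglede's actual route is different in kind: he first establishes a local representation $f=R-\hat\mu$ with $\hat\mu(z)=\int d\mu(\zeta)/(\zeta-z)$, where $\mu$ is carried by the complement of a suitable fine neighborhood $V$ and satisfies $\int d|\mu|(\zeta)/|\zeta-z|^{m}$ bounded on $V$ for every $m$; statements (1) and (2) then follow by differentiating under the integral. Your sketch gestures at this but does not construct the measure or prove the moment bounds, and the wedge-theorem ``transport'' of the estimate from the single base point $z_0$ to all of $V\times V$ is likewise only asserted.

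The argument for (3) is circular. The ``usual factorization'' you invoke --- that a finely holomorphic $h$ with $h(z_0)=0$ which is not identically zero near $z_0$ factors as $(z-z_0)^m h_1$ with \emph{finite} $m$ and $h_1(z_0)\ne 0$ --- presupposes that a not-identically-vanishing finely holomorphic function cannot vanish to infinite order at a point. That is exactly the content of (3) in its local form; for general $C^\infty$ functions satisfying bounded-quotient estimates like \eqref{Tay} the analogous claim is false (consider $e^{-1/x^2}$), so the finiteness of $m$ cannot be taken as ``usual'' and must be extracted from quasi-analyticity-type control on the constants in \eqref{Tay} (or from the moment bounds on $\mu$), which is precisely the nontrivial part of Fuglede's proof. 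The finely-open-and-closed connectedness argument at the end is fine once the local statement is in hand, but as written the proposal assumes the theorem to prove it.
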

We now introduce finely isolated singularities.
\begin{definition} Let $D$ be a fine  domain, $a\in D$, and $f$ finely holomorphic on $D\setminus \{a\}$.
\begin{itemize}
\item  If $f$ extends  as a finely holomorphic function  to all of  $D$, then $f$ has a removable singularity at $a$.
\item If $f(z)\to\infty$ for $z\to a$ finely, $f$ has a pole at  $a$.
\item If $f$ has no pole at $a$ nor a removable singularity, then $f$ has an essential singularity at $a$.
\end{itemize}
\end{definition}
\begin{theorem}\label{Riemann} Let $D$ be a fine domain and $a\in D$. Let $f$ be finely holomorphic on $D\setminus\{a\}$ and suppose that $f$ is bounded on a fine neighborhood $V$ of  $a$. Then $a$ is a removable singularity of $f$.
\end{theorem}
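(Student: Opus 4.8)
The plan is to reduce the fine-analytic statement to the corresponding classical fact via the local rational approximation in Theorem \ref{finehol}(2), together with the elementary geometric Lemma \ref{Lem1}. Since finely holomorphic functions are finely continuous and the property of being a removable singularity is local, I may assume that $D$ is a small fine neighborhood of $a$ and that $f$ is bounded, say $|f|\le M$, on all of $D\setminus\{a\}$. By Theorem \ref{finehol}(2) applied at a point near $a$, and more importantly using Lemma \ref{Lem1}, I first extract a sequence of genuine Euclidean circles $C(a,t_n)$ with $t_n\in[C^{-n-1},C^{-n}]$ that lie inside $D\setminus\{a\}$ for a suitable fixed $C>1$. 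These circles will play the role of the circles in the classical Riemann removable-singularity theorem.

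Next I want to realize $f$ near $a$, on the ``good'' part of $D$, as a classical holomorphic function. By Theorem \ref{finehol}(2) there is a fine neighborhood $V\subset D\setminus\{a\}$ of a fixed nearby point on which $f$ is a uniform limit of rational functions; but to handle all of a punctured fine neighborhood of $a$ at once I instead use finely harmonicity: by Theorem \ref{finehol}(3), $f$ and $z\mapsto zf(z)$ are finely harmonic on $D\setminus\{a\}$, and a bounded finely harmonic function on a fine domain, together with the fact that each circle $C(a,t_n)$ is a genuine (non-polar, in fact Euclidean-closed) set contained in that fine domain, lets me invoke the Poisson-type representation / maximum principle for finely harmonic functions on the ``fine annulus'' between two consecutive circles. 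Concretely, on the Euclidean annulus $A_n=\{C^{-n-1}<|z-a|<C^{-n}\}$ one does not have $f$ classically holomorphic, but one does control $f$ by its boundary values on $C(a,t_n)\cup C(a,t_{n-1})$, which are bounded by $M$; passing to the limit and using that the $t_n\to 0$, the standard Cauchy-estimate argument shows that the putative Laurent coefficients of negative index vanish.

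The cleanest route is probably this: fix $n$, and let $g$ be finely holomorphic and bounded on the fine domain trapped between $C(a,t_{n})$ and $C(a,t_{1})$ (all these circles lying in $D$); define $F(w)=\frac{1}{2\pi i}\int_{C(a,t_1)}\frac{f(\zeta)}{\zeta-w}\,d\zeta-\frac{1}{2\pi i}\int_{C(a,t_n)}\frac{f(\zeta)}{\zeta-w}\,d\zeta$ for $w$ in the Euclidean annulus between the two circles. One checks, using that $f$ is finely holomorphic hence satisfies the Cauchy integral formula over the boundary circles of any ``fine Jordan region'' squeezed between them (this is where Theorem \ref{finehol}(2), the rational approximation, is used to transfer the classical Cauchy formula), that $F$ agrees with $f$ on the overlap, and that the second integral tends to $0$ as $n\to\infty$ because $|f|\le M$ and $t_n\to0$. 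Hence $f$ extends to a function holomorphic on the full Euclidean punctured disc $0<|z-a|<t_1$, bounded there, so by the classical Riemann theorem it extends holomorphically across $a$; this Euclidean-holomorphic extension is in particular finely holomorphic and agrees with $f$ finely near $a$, giving the removable singularity.

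The main obstacle I anticipate is justifying the Cauchy integral formula over the Euclidean circles $C(a,t_n)$ for the finely holomorphic function $f$: a priori $f$ is only defined on a fine neighborhood, not on a Euclidean one, so one must argue that after passing to the circles furnished by Lemma \ref{Lem1} (which do lie in the fine domain), the rational approximants from Theorem \ref{finehol}(2) converge uniformly on these circles and hence the classical Cauchy formula for rational functions passes to the limit. Making the patching of the local rational approximations into a single integral identity valid on a whole Euclidean annulus — rather than just near one point — is the delicate bookkeeping step; once that is in place, boundedness plus $t_n\to 0$ kills the singular part exactly as in the classical proof.
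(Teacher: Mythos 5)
There is a genuine gap, and it sits exactly at the step you flag as ``delicate bookkeeping'': the identity $F=f$ on the overlap is false in general, and no amount of bookkeeping will rescue it. The function $f$ is only defined on the fine domain $D$, and $D$ need not contain the Euclidean annulus between two of the circles $C(a,t_n)$ supplied by Lemma \ref{Lem1}; the set $\CC\setminus D$ may well meet that annulus in a compact set of positive capacity. Your own mechanism for transferring the Cauchy formula makes the failure visible: the rational approximants $R$ from Theorem \ref{finehol}(2) have their poles in $\CC\setminus D$, hence possibly \emph{inside} the annulus, so already for $R$ the two-circle formula reads
\[
\frac{1}{2\pi i}\int_{C(a,t_1)}\frac{R(\zeta)}{\zeta-w}\,d\zeta-\frac{1}{2\pi i}\int_{C(a,t_n)}\frac{R(\zeta)}{\zeta-w}\,d\zeta=R(w)+\sum(\text{residues at poles of }R\text{ in the annulus}),
\]
and the residue sum does not vanish. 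Consequently your conclusion that $f$ coincides on $D$ with a function holomorphic on the full Euclidean punctured disc $0<|z-a|<t_1$ cannot be right: it would imply that every bounded finely holomorphic function is locally the restriction of a classically holomorphic function on a Euclidean disc, which would collapse the whole theory (a correct Cauchy-type representation for finely holomorphic functions integrates against harmonic measure of the fine boundary, not arc length on circles lying in $D$). A second, smaller issue: Theorem \ref{finehol}(2) gives uniform rational approximation only on some fine neighborhood of each point, so even the convergence of the approximants uniformly on a whole circle $C(a,t_n)$ is not immediate from the statements you invoke.

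The intended argument is much shorter and avoids contour integration entirely. By Theorem \ref{finehol}(3), $f$ and $z\mapsto zf(z)$ are finely harmonic on $D\setminus\{a\}$ and bounded near $a$ (for $zf(z)$ because $z$ is bounded there). A bounded finely harmonic function on a finely open set minus a polar set extends finely harmonically across that polar set (\cite[Corollary 9.15]{Fu}); applying this to both functions and invoking Theorem \ref{finehol}(3) once more shows $f$ is finely holomorphic on all of $D$. If you want to keep a ``classical reduction'' flavor, the right classical input is the removable-singularity theorem for bounded (finely) harmonic functions at polar sets, not the Riemann theorem via Laurent coefficients.
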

\begin{proof} (As indicated in \cite{Fu88}.) We apply Theorem \ref{finehol}, no. 3. Clearly  $f$ and $z\mapsto zf(z)$ are bounded, finely harmonic functions on  $V\setminus \{a\}$.  \cite[Corollary  9.15]{Fu} states that these functions extend to  be finely harmonic on all of $D$ and again by Theorem \ref{finehol} $f$ is finely holomorphic on $D$.
\end{proof}

\section{Fine domains of existence}\label{sec2}

For a set $A$ in $\CC$, we denote its fine interior by $\fint A$, its fine closure by $\fcl{A}$ and its fine boundary by $\partial_f A$.

\begin{definition}\label{Def.existence} A fine domain $U$ is called a \emph{fine domain of existence} if there exists a finely holomorphic function $f$ on $U$ with the property that for every fine domain $V$ that intersects $\partial_f U$ and for every fine component $\Omega$ of $V\cap U$, the restriction $f|_\Omega$ admits no finely holomorphic extension to $V$.
\end{definition}
\begin{definition} Let $U$ be a fine domain in $\CC$. If there exist compact sets $K_n\subset U$ such that $U = \bigcup_{n=1}^\infty K_n$ with $K_1 \subset \fint K_2 \subset K_2 \subset \fint K_3 \subset \cdots$, then the sequence $\{K_n\}$  is called a \emph{fine exhaustion} of $U$.  If, moreover, the $K_n$ have the property that every bounded component of $\CC \setminus K_n$ contains a point from $\CC \setminus U$, we call $\{K_n\}$ a \emph{special fine exhaustion} of $U$.
\end{definition}
We have the following lemma, which is a consequence of the Lusin--Menchov property of the fine topology.
\begin{lemma}[{\cite[Corollary 13.92]{MR2589994}}]\label{LuMe}
Let $U$ be a finely open set and let $K$ be a compact subset of $U$. Then there exists a Borel finely open set $V$ such that $K \subset V \subset \overline{V} \subset U$. 
\end{lemma}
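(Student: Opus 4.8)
The plan is to reduce the lemma to producing a \emph{Euclidean closed} set $C$ with
\[
K\subseteq\fint C\subseteq C\subseteq U .
\]
Granting such a $C$, put $V:=\fint C$: then $V$ is finely open, $K\subseteq V$, and $\overline V\subseteq\overline C=C\subseteq U$, so it only remains to note that $V$ is Borel. Writing $\Omega:=\CC\setminus C$ (a Euclidean open set), we have $\fint C=\{z:\Omega\text{ is thin at }z\}$, and by Wiener's criterion this set is Borel (see \cite{ArGa}, \cite{Doob}). It is convenient to reformulate the whole problem once more: with $F:=\CC\setminus U$ — a finely closed set, disjoint from the Euclidean compact set $K$ — the task becomes \emph{to find a Euclidean open set $\Omega$ with $F\subseteq\Omega$ that is thin at every point of $K$}; then $C:=\CC\setminus\Omega$ works.

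I would first solve this thickening problem pointwise. Fix $x\in K$. Since $F$ is finely closed and $x\notin F$, the set $F$ is thin at $x$, and by Wiener's criterion together with the outer regularity of capacity one may enlarge $F$, one dyadic annulus about $x$ at a time, to a Euclidean open set $\Omega_x\supseteq F$ whose Wiener series at $x$ still converges, the extra capacity added in each annulus being chosen small enough to be summable against Wiener's weights (minuscule in those annuli where $F$ is locally polar, bounded and finite in the finitely many large-scale annuli). Thus $\Omega_x\supseteq F$ is Euclidean open, thin at $x$, with $x\notin\Omega_x$, and the finely open set $\fint(\CC\setminus\Omega_x)=\{z:\Omega_x\text{ is thin at }z\}$ is a fine neighbourhood of $x$ contained in $\CC\setminus\Omega_x\subseteq U$.

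The main obstacle is the passage from these pointwise thickenings to a \emph{single} Euclidean open $\Omega\supseteq F$ that is thin at every point of $K$ at once. The finely open sets $\fint(\CC\setminus\Omega_x)$, $x\in K$, cover $K$ and lie in $U$, but $K$ is only Euclidean compact, not finely compact (finite sets are the only finely compact sets), so there is no finite subcover; the cover cannot be refined by Euclidean balls, since the $\fint(\CC\setminus\Omega_x)$ may be Euclidean nowhere dense; and one cannot simply intersect countably many of the closed sets $\CC\setminus\Omega_x$, because $z\in\fint(\CC\setminus\Omega_x)$ does not prevent $z\in\Omega_{x'}$ for some other $x'$. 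Closing this gap is exactly the role of the Lusin--Menchov property of the fine topology, and it is what the proof of \cite[Corollary 13.92]{MR2589994} uses. Concretely, I would carry out the annulus-by-annulus enlargement of $F$ simultaneously along a countable dense subset $\{x_n\}$ of $K$ with a globally summable capacity budget $\sum_n\varepsilon_n$, exploiting that the large-scale part of the enlargement performed for $x_n$ meets only boundedly many dyadic annuli about any fixed point while its small-scale part is irrelevant near other points, and then verify, again by Wiener's criterion, that the resulting $\Omega$ is thin not merely at each $x_n$ but at every point of $K$ — the accumulation set of $\{x_n\}$ being where the estimates have to be done with care. With such an $\Omega$ in hand, $C:=\CC\setminus\Omega$ and $V:=\fint C$ complete the proof.
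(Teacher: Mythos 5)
The paper offers no proof of this lemma at all --- it is quoted verbatim from \cite[Corollary 13.92]{MR2589994} --- so your attempt has to stand on its own, and it does not yet.

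What is correct and worth keeping is the chain of reductions: it suffices to produce a Euclidean open set $\Omega\supseteq\CC\setminus U$ that is thin at \emph{every} point of $K$, after which $V:=\fint(\CC\setminus\Omega)$ works; and $V$ is Borel (indeed an $F_\sigma$), since $V$ is the complement of the base $b(\Omega)$, which is a $G_\delta$ by Brelot's theorem --- a fact the paper itself invokes just before Lemma \ref{lemma1.1}. The pointwise thickening of $\CC\setminus U$ at a single $x\in K$, via Wiener's criterion and outer regularity of capacity, is likewise standard and fine.

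The gap is the globalization, which is not a technicality but the entire content of the lemma (it \emph{is} the Lusin--Menchov property), and your sketch of it does not close. Two concrete problems. First, the heuristic that ``the small-scale part of the enlargement performed for $x_n$ is irrelevant near other points'' fails exactly where you flag the danger: if $y\in K$ is an accumulation point of the dense sequence $\{x_n\}$, then for every $j$ infinitely many $x_n$ lie within $2^{-j}$ of $y$, so their small-scale enlargements intrude into the dyadic annuli $A_{j'}(y)$ for all $j'\gtrsim j$; nothing in the construction prevents these contributions from destroying the Wiener series at $y$. Second, a single summable capacity budget $\sum_n\varepsilon_n$ is not the right currency: in the plane the Wiener series at $y$ weights the capacity $c_j$ added in the $j$-th annulus about $y$ by $j/\log(1/c_j)$, so convergence requires the added capacity in $A_j(y)$ to decay super-exponentially in $j$, \emph{simultaneously for every} $y\in K$ --- a two-parameter bookkeeping (in the point of $K$ and in the scale) that has to be designed into the construction and is nowhere carried out. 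Finally, appealing to ``the Lusin--Menchov property \ldots\ what the proof of \cite[Corollary 13.92]{MR2589994} uses'' at precisely this step is circular, since that corollary is the statement under proof. As written, your argument proves the classical pointwise fact and correctly names, but does not supply, the hard step.
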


\begin{prop}\label{generalUfinecompactexhaustion}
Let $U$ be a fine domain that is a Euclidean $F_\sigma$. Then $U$ admits a special fine exhaustion.
\end{prop}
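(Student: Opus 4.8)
The plan is to build the sets $K_n$ by iterating Lemma~\ref{LuMe} together with Lemma~\ref{Lem1}, while exhausting $U$ and keeping the compact pieces finely-inside the next one. Write $U=\bigcup_{j=1}^\infty F_j$ with each $F_j$ Euclidean closed and (after replacing $F_j$ by $F_1\cup\cdots\cup F_j$) increasing. The complement $\CC\setminus U$ is then a Euclidean $G_\delta$. The idea is to produce, inductively, compact sets $K_n\subset U$ with $K_{n-1}\subset\fint K_n$, with $K_n\supset F_n\cap\overline{B(0,n)}$ so that $\bigcup K_n=U$, and with the extra "special" property that every bounded component of $\CC\setminus K_n$ meets $\CC\setminus U$.

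For the inductive step, suppose $K_{n-1}$ has been constructed. First enlarge the target compact set $L:=K_{n-1}\cup\bigl(F_n\cap\overline{B(0,n)}\bigr)$, which is compact and contained in $U$. Apply Lemma~\ref{LuMe} to get a Borel finely open $V$ with $L\subset V\subset\overline V\subset U$; I would like to take $K_n$ to be a compact set sitting between $V$ and $\overline V$, and the point of Lemma~\ref{Lem1} is exactly to do this while controlling the topology of the complement. Concretely, cover $L$ by finitely many fine neighborhoods and use Lemma~\ref{Lem1} to surround each point (or each of a finite $\delta$-net) by a small circle $C(a,t)\subset V$; the union of the closed discs bounded by these circles, intersected with a large closed ball and with the Euclidean-closed "lower approximations" of $U$, gives a compact set $K_n$ with $L\subset K_{n-1}\cup(\text{disc interiors})\subset\fint K_n$ and $K_n\subset\overline V\subset U$. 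Finally, to enforce the special property, "fill in" $K_n$: replace $K_n$ by $K_n$ together with all bounded components of $\CC\setminus K_n$ that are disjoint from $\CC\setminus U$. Since those components are then contained in $U$ and relatively compact, the resulting set $\widetilde K_n$ is still a compact subset of $U$; filling holes does not destroy $K_{n-1}\subset\fint K_n\subset\fint\widetilde K_n$; and by construction every bounded component of $\CC\setminus\widetilde K_n$ now meets $\CC\setminus U$.

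The main obstacle, and the step needing the most care, is simultaneously achieving $K_{n-1}\subset\fint K_n$ (a fine-topology, i.e.\ "thinness", condition) and the Euclidean-topological control of $\CC\setminus K_n$ needed for the special property, while staying inside $U$. Lemma~\ref{LuMe} gives the fine-interior room, and Lemma~\ref{Lem1} converts that room into genuine round circles lying in $U$, which is what makes the complement's component structure Euclidean-manageable; one has to check that the hole-filling operation at the end is compatible with the fine-interior nesting, i.e.\ that $\fint K_n$ does not shrink when bounded complementary components contained in $U$ are adjoined — this holds because adjoining a relatively compact set changes $K_n$ only "inward", so $\fint K_n\subset\fint\widetilde K_n$. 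A secondary point is verifying that $\bigcup_n\widetilde K_n=U$: this follows since $\widetilde K_n\supset K_n\supset F_n\cap\overline{B(0,n)}$ and the sets $F_n\cap\overline{B(0,n)}$ exhaust $U$, each $\widetilde K_n$ remaining a subset of $U$ throughout. Finally, one records that the $\widetilde K_n$ are nested with $\widetilde K_{n-1}\subset\fint\widetilde K_n$, so they form a fine exhaustion, and by the construction it is a special one.
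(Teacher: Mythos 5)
Your overall skeleton (induction, Lemma~\ref{LuMe}, then filling in the holes of the complement that lie in $U$) is the right one, but the central step --- actually producing the compact set $K_n$ with $L\subset\fint K_n$ and $K_n\subset U$ --- does not work as you describe it. Once Lemma~\ref{LuMe} hands you a finely open $V$ with $L\subset V\subset\overline V\subset U$, the correct move is simply to take $K_n:=\overline V\cap\overline{B(0,R)}$ for a suitable $R$: this is compact, contained in $U$, and $\fint K_n\supset V\cap B(0,R)\supset L$ because the fine interior of $\overline V$ contains the finely open set $V$. Your detour through Lemma~\ref{Lem1} and unions of closed discs fails for several reasons. First, Lemma~\ref{Lem1} only puts the \emph{circle} $C(a,t)$ inside $V$; the closed disc it bounds can contain points outside $V$, outside $\overline V$, and even outside $U$, so the union of those discs need not lie in $U$ at all. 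Second, you cannot ``cover $L$ by finitely many fine neighborhoods'' of the kind you want: $L$ is Euclidean compact, not finely compact (only finite sets are finely compact), and the radii supplied by Lemma~\ref{Lem1} have no uniform lower bound over a $\delta$-net, so the discs need not cover $L$. Third, your patch of intersecting with Euclidean-closed ``lower approximations'' $F_m$ of $U$ destroys the containment $L\subset\fint K_n$, since points of $L$ need not lie in the fine interior of any single $F_m$. Lemma~\ref{Lem1} is simply not needed in this proposition.

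There is a second, smaller gap in the hole-filling step. You correctly note that $\fint K_n\subset\fint\widetilde K_n$, but the assertion that the filled sets remain nested, $\widetilde K_{n-1}\subset\fint\widetilde K_n$, is exactly the point that needs an argument, and monotonicity alone does not give it: a point $x$ in a filled hole $D$ of $K_{n-1}$ may fail to lie in $K_n$, so you must show it still lies in $\fint\widetilde K_n$. The argument is: since $K_{n-1}\subset K_n$, the component of $\CC\setminus K_n$ containing such an $x$ is a connected subset of $\CC\setminus K_{n-1}$, hence is contained in $D\subset U$ and is bounded, so it is one of the components adjoined to form $\widetilde K_n$; therefore $D\subset\widetilde K_n$, and since $D$ is Euclidean open, $D\subset\fint\widetilde K_n$. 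Without this, the ``special fine exhaustion'' property of the modified sequence is not established.
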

\begin{proof}
Let $U = \bigcup_{n=1}^\infty F_n$ for compact sets $F_1 \subset F_2 \subset \cdots$. Fix a strictly increasing sequence $(r_n)_{n \geq 1}$ tending to infinity such that $F_n \subset B(0,r_n)$ for all $n \geq 1$. We put $K_1 := F_1$. For the construction of $K_2$, note that $F_2 \cup K_1 (=F_2)$ is a compact subset of $U$. By Lemma \ref{LuMe}, there exists a finely open set $V_2$ such that $F_2 \cup K_1 \subset V_2 \subset \overline{V_2} \subset U$. Then the set $K_2 := \overline{V_2} \cap \overline{B(0,r_2)}$ is compact and
\[
	\fint K_2 
    = \fint\overline{V_2} \cap \fint\overline{B(0,r_2)} \supset V_2 \cap B(0,r_2) \supset F_2 \supset F_1 = K_1.
\]
As induction hypothesis, suppose that for some $n \geq 2$, we have found compact sets $K_1, \ldots, K_n$ such that $K_j \subset \fint K_{j+1}$ for all $1 \leq j \leq n-1$, $F_j \subset K_j$ for all $1 \leq j \leq n$ and $K_j \subset B(0,r_{j+1})$ for all $1 \leq j \leq n$ and that we have found finely open sets $V_2, \ldots, V_n$ such that $F_j \cup K_{j-1} \subset V_j \subset \overline{V_j} \subset U$ for all $2 \leq j \leq n$. 

We now prove the induction step. Note that the set $F_{n+1} \cup K_n$ is a compact subset of $U$. By the previous lemma, there exists a finely open set $V_{n+1}$ such that $F_{n+1} \cup K_n \subset V_{n+1} \subset \overline{V_{n+1}} \subset U$. The set $K_{n+1} := \overline{V_{n+1}} \cap \overline{B(0,r_{n+1})}$ is compact, contained in $U$ and in $B(0,r_{n+2})$ and
\[
	\fint K_{n+1} = \fint\overline{V_{n+1}} \cap \fint\overline{B(0,r_{n+1})} \supset V_{n+1} \cap B(0,r_{n+1}) \supset F_{n+1} \cup K_n
\]
and therefore $\fint K_{n+1} \supset K_n$ and $K_{n+1} \supset F_{n+1}$. This proves the induction step.

Consequently, we can find compact subsets $K_1, K_2, \ldots$ of $U$ such that $K_1 \subset\fint K_2 \subset K_2 \subset\fint K_3 \subset \cdots$ and $F_n \subset K_n$ for all $n \geq 1$. It follows that $U = \bigcup_{n=1}^\infty K_n$, which proves that $U$ admits a fine exhaustion.

We will next adapt this fine exhaustion so that every bounded component of $\CC \setminus K_n$ contains a point from $\CC \setminus U$. 
Observe that $K_n=\CC\setminus\bigcup_j D^n_j$, where for each $n$ the $D^n_j$ are the countably, possibly infinitely, many mutually disjoint open components of $\CC\setminus K_n$. Let $D^n_0$ be the unbounded component.
Then for any $n$ and any finite or infinite sequence $1\le j_1\le j_2\le \cdots$ the set $K_n\cup \bigcup_k D^n_{j_k}$ is compact too. 
For every $n$ we set $K^*_n=K_n\cup\bigcup_k D^n_{j_k}$, where the $D^n_{j_k}$ are those components, if any, of $\CC\setminus K_n$ that are completely contained in $U$. We claim that $K_n^*\subset \fint(K_{n+1}^*)$.

Indeed, if $x\in K_n$ then $x\in \fint K_{n+1}\subset\fint (K_{n+1}^*)$. Now let $x\in D_{j_k}^n(\subset U)$.  For the proof that $x\in K^*_{n+1}$ we may suppose that $x\notin K_{n+1}$. Then $x$ belongs to a (necessarily bounded) component of $ \CC\setminus K_{n+1}$ that is completely contained in $D_{j_k}^n\subset U$, hence $x\in K^*_{n+1}$. It follows that  $D_{j_k}^n\subset K_{n+1}^*$ and as  $D_{j_k}^n$ is Euclidean open, $D_{j_k}^n\subset \fint(K_{n+1}^*)$. This proves the claim.
\end{proof}

\begin{figure}
 \centering
    \begin{subfigure}[b]{0.3\textwidth}
\begin{tikzpicture}[scale=0.02]

\clip (0,0) circle (100.5);

\fill (90:0) circle (0.1);

\draw (0,0) circle (100);
\draw (0,-100) circle (100);

\filldraw[white] (0,0) -- ([shift=(20:20)]0:0) arc (20:340:20) -- cycle;

\draw ([shift=(22.5:20)]90:0) arc (22.5:337.5:20);

\filldraw[fill=green!20,draw=green!50!black] (0,0) -- (10,10) -- (0,20) -- (-10,10) -- cycle;
\draw (0,0) -- (5,10) -- (0,20);

\filldraw[fill=green!20,draw=green!50!black] (0,0) -- (-10,-10) -- (0,-20) -- (10,-10) -- cycle;
\draw (0,0) -- (-7,-10) -- (0,-20);

\end{tikzpicture}

 \end{subfigure}
\qquad
    \begin{subfigure}[b]{0.3\textwidth}

\begin{tikzpicture}[scale=0.07]

\clip (-30,-30) rectangle (30,30);


\draw (0,0) circle (100);
\draw (0,-100) circle (100);

\filldraw[white] (0,0) -- ([shift=(20:20)]0:0) arc (20:340:20) -- cycle;

\draw ([shift=(22.5:20)]90:0) arc (22.5:337.5:20);

\filldraw[fill=green!20,draw=green!50!black] (0,0) -- (10,10) -- (0,20) -- (-10,10) node[above]{\tiny $S_1$} -- cycle;
\draw (0,0) -- (5,10) node[pos=1,left]{\tiny $W_1$} -- (0,20);

\filldraw[fill=green!20,draw=green!50!black] (0,0) -- (-10,-10) node[below]{\tiny $S_2$} -- (0,-20) -- (10,-10) -- cycle;
\draw (0,0) -- (-7,-10) node[pos=0.9,right]{\tiny $W_2$} -- (0,-20);

\filldraw[fill=red!20, draw=red!50, opacity=0.4] (-50,-7.6536686473) rectangle (50,7.6536686473);
\draw (-16,-3) node[above] {\tiny $b$} -- (40,12) node[pos=0.8,above] {\tiny $q$};
\filldraw (-16,-3) circle (0.1);

\end{tikzpicture}

 \end{subfigure}
 \qquad
    \begin{subfigure}[b]{0.3\textwidth}
\begin{tikzpicture}[scale=0.02]

\clip (0,0) circle (100.5);


\draw (0,0) circle (100);
\draw (0,-100) circle (100);

\filldraw[white] (0,0) -- ([shift=(20:20)]0:0) arc (20:340:20) -- cycle;

\draw ([shift=(22.5:20)]90:0) arc (22.5:337.5:20);

\draw (0,0) -- (3.82683432365,7.6536686473);

\draw (0,0) -- (-5.35756805311,-7.6536686473);

\end{tikzpicture}

 \end{subfigure}
\qquad
    \begin{subfigure}[b]{0.3\textwidth}

\begin{tikzpicture}[scale=0.08]

\clip (-30,-30) rectangle (30,30);

\draw (0,0) circle (100);
\draw (0,-100) circle (100);

\filldraw[white] (0,0) -- ([shift=(20:20)]0:0) arc (20:340:20) -- cycle;

\draw ([shift=(22.5:20)]90:0) arc (22.5:337.5:20);

\draw (0,0) -- (3.82683432365,7.6536686473);

\draw (0,0) -- (-5.35756805311,-7.6536686473);

\end{tikzpicture}
 \end{subfigure}

\caption{\label{boogsnijconstructie}Replacing a small circular arc.}
\end{figure}
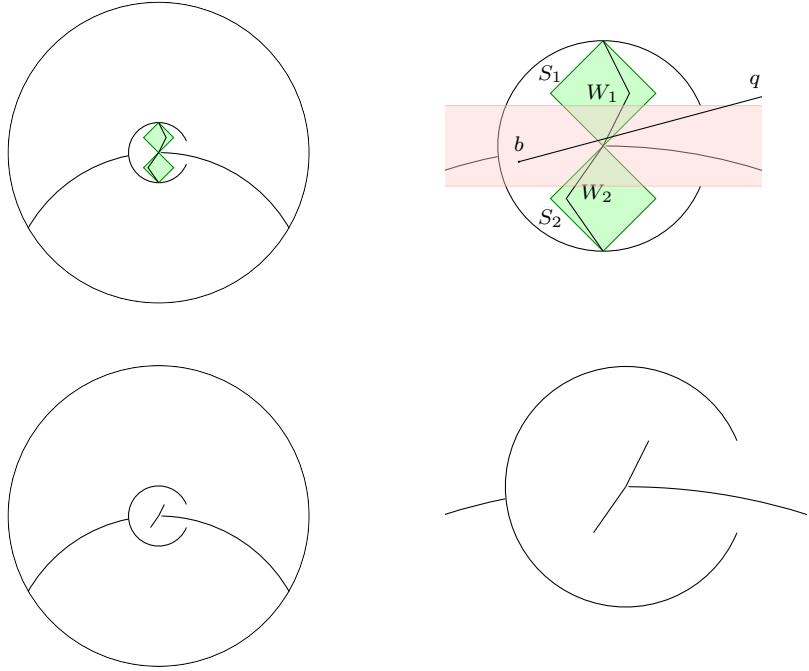

We will need the following lemma.  Figure \ref{boogsnijconstructie} illustrates its content.

\begin{lemma}\label{boogsnijlemma} Let $D$ be an open disc, $U$ a finely open subset of $D$, and let $a\in C\cap U$ where $C$ is an open  circular arc contained in $D$ with the property that $D\setminus C$ has two components.  

Then there exists a sequence $(r_j)_j$ of positive numbers decreasing to 0, such that for every $r_j$ there exists a compact set $C'(=C'_j)\subset U$, which is the union of four arcs, such that 
\begin{enumerate}
\item $D\setminus C'$ is connected;
\item $C'\setminus \overline{B(a,r_j)}=C\setminus \overline{B(a,r_j)}$;
\item Every wedge $\ell = [p,b] \cup [b,q]$ with $|p-b|=|b-q|>r_j$
that meets $C$ also meets $C'$.
\end{enumerate} 
\end{lemma}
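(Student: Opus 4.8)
The plan is to reduce everything to a small Euclidean disc around $a$, where the circular arc $C$ is almost a straight segment, and to do the surgery there. First I would normalise: translate so that $a=0$ and rotate so that $C$ is tangent to the real axis at $a$. Then for every sufficiently small $s>0$ the set $C\cap B(a,s)$ is the graph of a twice continuously differentiable function $\phi$ with $\phi(0)=\phi'(0)=0$ and $|\phi(t)|\le Mt^2$ (with $M$ the curvature of $C$), $C$ meets $C(a,s)$ in exactly two points $p_s^\pm$, and $B(a,s)\setminus C$ has exactly two components $B_s^+,B_s^-$ (locally above and below the arc). Since $U$ is finely open and $a\in U$, it is a fine neighbourhood of $a$, so Lemma~\ref{Lem1} yields a sequence $r_j\downarrow 0$ with $C(a,r_j)\subset U$; shrinking if necessary I also arrange that each $r_j$ is small enough for the graph description above and that the relevant portion of $C$ lies in $U$.

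Fix $r_j$. I would define $C'_j$ by \emph{keeping} $C\setminus B(a,r_j)$ and \emph{replacing} the deleted middle sub-arc $C\cap B(a,r_j)$ by two arcs $A_1,A_2$ contained in $\overline{B(a,r_j)}$, shaped so as to \emph{interlock} near $a$ in the pattern of the paths $W_1,W_2$ of Figure~\ref{boogsnijconstructie}: roughly, $A_1$ runs from the end $p_j^-$ of the kept part of $C$ inward and over to a tip lying in $B^+$ slightly past $a$, while $A_2$ runs symmetrically through $B^-$ to a tip slightly past $a$ on the other side, with $A_1$ and $A_2$ overlapping in the coordinate along $C$ but separated transversally to it. Both arcs are routed inside $U$, using sub-arcs of circles $C(a,s)\subset U$ supplied by Lemma~\ref{Lem1} together with the portion of $C$ near $a$; thus $C'_j\subset U$ is a union of four arcs. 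Condition (2) is then immediate, since $A_1\cup A_2\subset\overline{B(a,r_j)}$. For condition (1) I would verify that $C'_j$ does not separate $D$, most robustly by exhibiting a path from $B^+$ to $B^-$ in $\CC\setminus C'_j$ that winds around the tip of $A_1$, runs through the narrow channel squeezed between $A_1$ and $A_2$ just off the arc $C$, and winds around the tip of $A_2$.

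The substance of the lemma is condition (3), and this is where I expect the real work. Let $\ell=[p,b]\cup[b,q]$ be a wedge with $|p-b|=|b-q|=\lambda>r_j$ meeting $C$. If $\ell$ meets $C$ at a point outside $\overline{B(a,r_j)}$, that point also lies on $C'_j$ by (2) and we are done; so I may assume $\ell\cap C\subset B(a,r_j)$, i.e.\ $\ell$ crosses the deleted sub-arc near $a$. The heuristic is that a connected set getting from $B^+$ to $B^-$ while avoiding $C'_j$ must wind around \emph{both} tips, of $A_1$ and of $A_2$, whereas a wedge, being two straight segments meeting at one vertex, can route around at most one tip; hence $\ell$ must meet $A_1$, $A_2$, or the retained part of $C$. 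To make this rigorous I would fix the interlock quantitatively, the two tips separated transversally to $C$ by a definite multiple of $r_j$ while the channel through which one must slip has transverse width only a small multiple of $r_j$, and then argue by elementary planar geometry: the segment of $\ell$ carrying the crossing point has length $\lambda>r_j$, so it cannot stay inside that narrow channel, whence it meets $C'_j$; and if instead that segment leaves $\overline{B(a,r_j)}$, then after the vertex $\ell$ must re-enter $C'_j$ along $C\setminus B(a,r_j)$ or along $C(a,r_j)$. The borderline cases to treat separately are when the crossing point is the vertex $b$ (so both arms start on $C$ near $a$) and when an arm is nearly tangent to $C$ (so that "crossing the deleted sub-arc" must be shown to force meeting the retained arc of $C$ outside the ball). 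The delicate point throughout is that (1) forces a genuine gap in $C'_j$ near $a$ while (3) forces that gap to be shielded so tightly that no long, single-vertex, piecewise-straight path can exploit it; I expect the bulk of a full proof to be the elementary but careful geometry verifying that the interlocking arcs of Figure~\ref{boogsnijconstructie} achieve both simultaneously, together with the bookkeeping that keeps $C'_j$ inside $U$.
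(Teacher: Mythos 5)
Your setup (normalising at $a$, taking the radii $r_j$ from Lemma \ref{Lem1}, keeping $C$ outside the ball and doing surgery inside) matches the paper, but your surgery and your argument for condition (3) have a genuine gap. The heart of your plan is the heuristic that a connected set getting from $B^+$ to $B^-$ while avoiding $C'_j$ must wind around both tips, whereas a wedge can round at most one. But a wedge that \emph{meets} $C$ need not pass from $B^+$ to $B^-$ at all: it may simply have an endpoint $q$ on, or just beyond, the deleted sub-arc, with $b$ and $p$ far away on one side. In your interlocking configuration $A_1$ roofs over the gap only up to its tip ``slightly past $a$'', so a segment descending from far above to a point of the deleted arc whose abscissa lies beyond that tip (or sneaking in almost tangentially through the slot between the tip of $A_1$ and the retained part of $C$) meets $C$ while missing $A_1$, $A_2$ and the retained arc entirely; its length $>r_j$ is no obstruction, because all of that length can sit harmlessly on one side of $C$. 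This is precisely the case your list of borderline cases omits, and it is fatal to the plan: interlocking tips alone cannot reconcile (1) with (3).

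The paper's construction avoids this with two ingredients you do not have. First, $C'$ contains \emph{almost all of the circle} $C(a,r_j)$ --- everything except a small arc around the point where the \emph{retained} half of $C\cap B(a,r_j)$ exits the ball (only the half of $C$ on one side of $a$ is deleted). Since a wedge arm longer than the diameter of $\overline{B(a,r_j)}$ that contains a point of $B(a,r_j)$ must leave the closed ball, any offending arm is forced to exit through that small gap and hence to run roughly parallel to $C$ across the whole disc; that is how the length hypothesis is actually used, and it disposes of the ``touch and retreat'' wedges above. Second, the two extra straight segments are not free-floating tips but emanate from the point $a$ itself: they are the lower halves of wedges joining $a$ to $\pm i r_j$ supplied by Theorem \ref{wedgetheorem} --- which you never invoke, but which is what guarantees that such segments lie in $U$; concatenations of sub-arcs of circles $C(a,s)$ and pieces of $C$ cannot reach a tip with a prescribed transversal offset. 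Because the retained half of $C$ and both segments meet at $a$, there is no slot near $a$, while the passage that keeps $D\setminus C'$ connected lies at $\Re z<-r_j/2$ and is reachable by a long segment only after it crosses the barrier $[w_1,a]\cup[a,w_2]$ inside $S_1\cup S_2$, where its ordinate is pinned below $r_j/2$. You would need to rebuild $C'_j$ along these lines before the ``elementary but careful geometry'' you defer could actually be carried out.
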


\begin{proof} By Theorem \ref{wedgetheorem} there exists a fine neighborhood $V\subset U$ of $a$ such that any two distinct points $p,q\in V$ can be connected by a wedge $L\subset U$ of length less than $\sqrt 2 |p-q|$. Lemma \ref{Lem1} provides us with a sequence $(r_j)_j$ such that $C(a,r_j)\subset V$. 

After scaling and rotating we may assume that $C=C(-i,1)\cap D$ and that $a=0$.  
We can also assume all $r_j$ less than $1$ and so small that $\overline{B(0, r_j)}\subset D$. Now we fix $r=r_j$ and set
\[C_1=\left(C\setminus\left( B(0,r)\cap\{\Re z <0\}\right)\right)\cup\left(\{re^{i\theta}: \pi/6\le \theta\le 11\pi/6\}  \right).\]
$C_1$ is contained in $V$ along with $C(0,r)$, therefore $i r$, respectively $-i r$, (both on $C_1$) can be connected to $0(=a)\in V\subset U$ by a wedge $W_1$, respectively $W_2$, of length less than $\sqrt 2r$ contained inside $U$. Observe that the wedges $W_1, W_2$ are contained in $\overline{B(0,r)}$, and in fact $W_1$ is contained in the closed square $S_1$ with diagonal $[0,ir]$ and similarly $W_2$ is contained in the closed square $S_2$ with diagonal $[0,-ir]$, see Figure \ref{boogsnijconstructie}.

Let $\ell= [p,b] \cup [b,q]$ be any wedge with $|p-b|=|b-q|>2r$ that meets $C$ and assume that $\ell$ does not meet $C_1$. Then one of its constituting segments, say $[b,q]$, meets $C\cap B(0,r)\cap\{\Re z<0\}$ and because $|q-b|>2r$, it meets $C(0,r)$ in a point $re^{i\theta_0}$ with $-\pi/6<\theta_0<\pi/6$. Therefore, as $r<1$,  $[b,q]\cap(S_1\cup S_2)$ is contained in $\{z: |\Im z|<r\sin(\pi/6)=r/2\} \cap(S_1\cup S_2)$, as indicated in  Figure  \ref{boogsnijconstructie}.

Now we define $L_j=W_j\cap \{z: |\Im z|\le r/2
\}\subset U$, $j=1,2$ and $C'=C_1\cup L_1\cup L_2\subset U$ is the the union of three circular arcs and one arc consisting of two straight segments, all contained in $U$. Clearly $\ell$ meets $C'$ and $D\setminus C'$ is connected.
\end{proof}

\begin{prop}\label{prop2}
Let $U \subsetneq \CC$ be a nonempty fine domain that is a Euclidean $F_\sigma$ and let $\emptyset\ne F_1\subset F_2\subset \cdots$ be an increasing sequence of compact sets in $\CC\setminus U$.  
Then there exist sequences $(K_n)_{n \geq 1}$ and $(L_n)_{n \geq 1}$ of nonempty compact sets such that \begin{enumerate}
\label{alph}
\item[(a)] $K_1 \subset \fint K_2 \subset K_2 \subset\fint K_3 \subset \cdots$ and $\bigcup_{n=1}^\infty K_n=U$, 
\item[(b)] $d(K_n, L_n) > 0$ for all $n \geq 1$,
\item[(c)] $L_n \subset K_{n+1}$ for all $n \geq 1$,
\item[(d)] for any $n \geq 1$, every two points $w,z \in \CC \setminus (K_n \cup L_n)$ in the same component of $\CC \setminus K_n$ also lie in the same component of $\CC \setminus (K_n \cup L_n)$, 
\item[(e)] every bounded component of $\CC \setminus (K_n \cup L_n)$ contains a point from $\CC \setminus U$,
\item[(f)] every wedge of length at least $1/n$ that meets $F_n$ and $K_n$, also meets $L_n$.
\end{enumerate}
Moreover, there exists a sequence $(f_\nu)_{\nu \geq 1}$ of rational functions with poles in $\CC \setminus U$ such that 
\begin{enumerate}
\item the sup-norm $\|f_n - f_{n-1}\|_{K_n} < 1/2^n$ for all $n \geq 2$ and
\item $|f_n| > n$ on $L_n$ for all $n \geq 1$.
\end{enumerate}
The sequence $(f_\nu)_\nu$ converges uniformly on any $K_n$ to a function $f$ that is finely holomorphic on $U$ and has the property that $|f| > n-1/2^{n}$ on $L_n$ for any $n \geq 1$.
\end{prop}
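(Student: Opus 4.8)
The plan is to construct the sets $K_n$, $L_n$ and the functions $f_\nu$ simultaneously by induction on $n$, using Proposition~\ref{generalUfinecompactexhaustion} to get the underlying exhaustion and Lemma~\ref{boogsnijlemma} to produce the "trap" sets $L_n$ that force the final function to blow up. First I would fix a special fine exhaustion $(K_n^0)$ of $U$ coming from Proposition~\ref{generalUfinecompactexhaustion}, and I would also enlarge it if necessary so that $F_n$ is contained in a bounded component of $\CC\setminus K_n^0$ (or more precisely so that every wedge of length $\geq 1/n$ meeting $F_n$ is "seen" by the construction); the $K_n$ will be slight modifications of the $K_n^0$, obtained by the arc-replacement of Lemma~\ref{boogsnijlemma} applied finitely many times near boundary arcs, which by item~(1) of that lemma does not create new bounded complementary components and by item~(2) changes $K_n^0$ only in a small neighborhood, so properties (a)--(c) and (e) are preserved. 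At each stage the boundary $\partial K_n$ contains finitely many circular arcs separating the plane locally; I run Lemma~\ref{boogsnijlemma} on each such arc at a point $a\in\partial_f U$ lying on it, at a scale $r_j<1/n$ chosen small, to obtain $C'_j$, and I take $L_n$ to be the (compact) union of the little wedge-pieces $L_1,L_2$ produced there, suitably thickened to compact sets inside $U$ a fixed positive distance $d(K_n,L_n)>0$ away from $K_n$ — this gives (b) — while arranging $L_n\subset \fint K_{n+1}$, which gives (c).

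Next I would verify the topological properties (d), (e), (f). For (f): any wedge $\ell$ of length $\geq 1/n$ that meets both $F_n$ and $K_n$ must cross one of the separating arcs of $\partial K_n$ on the way from $F_n$ to $K_n$, and by item~(3) of Lemma~\ref{boogsnijlemma} (applied with the scale $r_j$ chosen $<1/n$, say $<1/(2n)$) it therefore also meets the replaced arc $C'$, hence meets $L_n$. For (d): adding $L_n$ to $K_n$ removes from $\CC\setminus K_n$ only the thin wedge-slivers, and since each $C'_j$ was chosen so that $D\setminus C'_j$ is connected (item~(1)), the complement $\CC\setminus(K_n\cup L_n)$ is obtained from $\CC\setminus K_n$ by deleting sets whose removal does not disconnect any component — I would spell this out by noting that $L_n\subset \fcl U\subset \ldots$ lies inside the "hole" structure in a controlled way, so two points of a component of $\CC\setminus K_n$ can still be joined by a path avoiding the slivers. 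Property (e) is inherited from the special fine exhaustion together with the fact that $L_n\subset K_{n+1}$, so that no genuinely new bounded component is created (any bounded component of $\CC\setminus(K_n\cup L_n)$ is contained in a bounded component of $\CC\setminus K_n$, which already meets $\CC\setminus U$).

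Having the geometry in place, I would build the rational functions. Properties (d) and (e) are exactly what is needed to invoke a Runge-type approximation: because every bounded component of $\CC\setminus(K_n\cup L_n)$ meets $\CC\setminus U$, one can choose a rational function $g_n$ with poles in $\CC\setminus U$ that is, on $K_n$, uniformly close to $f_{n-1}$, and, on $L_n$, uniformly close to a prescribed large constant; the point of (d) is that $K_n$ and $L_n$ lie in possibly different components, so one first approximates the locally constant function equal to $f_{n-1}$ near $K_n$ and equal to (say) $2^{n}\cdot\|f_{n-1}\|_{K_n}+2n$ near $L_n$ — this is holomorphic on a neighborhood of $K_n\cup L_n$ since $d(K_n,L_n)>0$ — and then pushes poles out into $\CC\setminus U$ using Runge. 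Setting $f_n:=g_n$ with the tolerance $<1/2^{n+1}$ on $K_n$ and the lower bound tuned so that $|f_n|>n$ on $L_n$ gives items (1) and (2). Then $(f_\nu)$ is uniformly Cauchy on each $K_n$ (since for $m>n$, $f_m-f_n$ telescopes and $\sum 1/2^k$ converges, using $K_n\subset K_{n+1}\subset\cdots$), hence converges uniformly on each $K_n$ to a limit $f$; since each $z_0\in U$ lies in some $\fint K_n$, a fine neighborhood of $z_0$ on which $f$ is a uniform limit of rationals, Theorem~\ref{finehol}(2) makes $f$ finely holomorphic on $U$. Finally, on $L_n$ we have $|f|\geq |f_n|-\sum_{k>n}\|f_{k}-f_{k-1}\|_{K_{k}}> n-\sum_{k>n}2^{-k}=n-2^{-n}$, using $L_n\subset K_{n+1}\subset K_k$ for $k\geq n+1$.

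The main obstacle I anticipate is the geometry of the $L_n$: making Lemma~\ref{boogsnijlemma} — which is a purely local statement about a single separating circular arc — do its job simultaneously along the whole finely-complicated boundary $\partial_f K_n$, while keeping $L_n$ compact, keeping $d(K_n,L_n)>0$, keeping $L_n\subset \fint K_{n+1}$, preserving the special-exhaustion property (e), and preserving connectivity (d). This is the step where the precise bookkeeping (which arcs of $\partial K_n$ to replace, at which base points $a\in\partial_f U$, at which scales $r_j<1/n$, and how to thicken the resulting arcs to compact sets inside $U$) has to be done carefully; the analytic part (Runge approximation plus the telescoping estimate) is then routine.
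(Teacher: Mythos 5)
Your overall architecture --- a special fine exhaustion from Proposition \ref{generalUfinecompactexhaustion}, barrier sets $L_n$ built from circular arcs modified by Lemma \ref{boogsnijlemma}, Runge approximation with poles pushed into $\CC\setminus U$ via property (e), and the telescoping estimate giving fine holomorphy and $|f|>n-2^{-n}$ on $L_n$ --- is the paper's, and your analytic half is essentially correct (the paper approximates the function that is $0$ near $K_n$ and a large constant near $L_n$ and sums the resulting $R_j$; you approximate $f_{n-1}$ directly on $K_n$; both work). But the geometric half, which you yourself flag as the main obstacle, has genuine gaps that would break the proof as written. First, the $K_n$ are not modified at all: the arcs do not lie on $\partial K_n$ (which for a general compact set contains no circular arcs) but in the gap $\fint K_{n+1}\setminus K_n$. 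The missing idea is how to manufacture them: for each of the finitely many components $D_k^n$ of $\CC\setminus K_n$ meeting $F_n$, cover the compact set $\partial D_k^n\subset\partial K_n$ by finitely many discs $B(z_j,\epsilon_{z_j})$ with $C(z_j,\epsilon_{z_j})\subset\fint K_{n+1}$ (possible by Lemma \ref{Lem1} since $K_n\subset\fint K_{n+1}$) and with $\epsilon_{z_j}<\delta_n:=d(K_n,F_n)$; then $\partial\bigl(\bigcup_j B(z_j,\epsilon_{z_j})\bigr)\cap D_k^n$ is a finite union of closed circular arcs $\mathcal{C}_l$ lying in $\fint K_{n+1}\setminus K_n$, and every path from $F_n\cap D_k^n$ to $K_n$ must cross one of them, because it starts outside $\bigcup_j B(z_j,\epsilon_{z_j})$ (as $\epsilon_{z_j}<\delta_n$) and must hit $\partial D_k^n\subset\bigcup_j B(z_j,\epsilon_{z_j})$. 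This covering construction is what yields (f), and it yields (b) and (c) for free since the arcs stay in $\fint K_{n+1}\setminus K_n$.

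Second, two specific points in your use of Lemma \ref{boogsnijlemma} would fail. You propose to perform the cut ``at a point $a\in\partial_f U$ lying on'' the arc; but the lemma requires $a\in C\cap U$ for the ambient finely open set (here $\fint K_{n+1}\cap B(a,r)$), because its proof invokes Theorem \ref{wedgetheorem} at $a$ to produce the connecting wedges $W_1,W_2$ inside that set --- at a fine boundary point of $U$ no such wedges inside $U$ need exist. The cut point must be taken in the relative interior of $\mathcal{C}_l$, which lies in $\fint K_{n+1}$. Moreover you take $L_n$ to be only ``the little wedge-pieces $L_1,L_2$ produced there''; that destroys (f), since a wedge running from $F_n$ to $K_n$ will in general cross the barrier along one of the long circular arcs, far from the tiny cut, and miss the slivers entirely. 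One must take $L_n$ to be the union of the \emph{entire} modified arcs $\bigcup_l\mathcal{C}_l'$; the sole purpose of the cutting is to make the complement of this union connected so that (d), and with it (e), hold. With these corrections your argument becomes the paper's proof.
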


\begin{proof} For $(K_n)$ we take a special fine exhaustion of $U$, $U=\bigcup_{n=1}^\infty K_n$, which exists in view of Proposition \ref{generalUfinecompactexhaustion}.
We next construct a sequence $(L_n)_{n \geq 1}$ of compact sets such that $d(L_n, K_n) > 0$ and $L_n \subset K_{n+1}$ for all $n \geq 1$.   

Let $n \geq 1$. The set $F_n\subset\CC\setminus U$ is compact and is therefore covered (uniquely) by those finitely many components of $\CC \setminus K_n$ that meet $F_n$. We denote these components by $D_1^n, D_2^n, \ldots, D_{k_n}^n$. Since $K_n$ and $F_n$ are compact and $K_n \cap F_n = \emptyset$, it follows that $\delta_n := d(K_n, F_n) > 0$. Consequently,
\begin{equation}\label{eq2.5}
\{z \in \CC : d(z, K_n) < \delta_n\} \cap F_n = \emptyset.
\end{equation}
Since $K_n \subset \fint K_{n+1}$, by Lemma \ref{Lem1} each $z \in \partial K_n$ is the center of a circle $C(z,\epsilon_z)$ contained in $\fint K_{n+1}$ of radius $\epsilon_z$ less than $\delta_n$. 
We will now construct $L_n$ by constructing the intersections $L_n \cap D_k^n$ (with some abuse of notation) in each of the components $D_1^n, D_2^n, \ldots, D_{k_n}^n$ and taking the union of these sets.

Fix a component $D_k^n$. Since $\partial D_k^n \subset \partial K_n$ is compact and is covered by the open disks $B(z,\epsilon_z)$ where $z \in \partial D_k^n$, it follows that there are finitely many points $z_1, \ldots, z_{m} \in \partial D_k^n$ such that $\partial D_k^n \subset \bigcup_{j=1}^{m} B(z_j, \epsilon_{z_j})$. Note that $(\partial \bigcup_{j=1}^{m} B(z_j, \epsilon_{z_j})) \cap D_k^n$ is a finite union of closed circular arcs $\mathcal{C}_l$ --and possibly finitely many points disjoint from these arcs-- of the components of $\bigcup_{j=1}^m\partial B(z_j, \epsilon_{z_j})\setminus\bigcup_{k=1
}^{m} B(z_k, \epsilon_{z_k})$. We define a compact set $L^*_{n,k}:=\bigcup_{l=1}^M \mathcal{C}_l\subset D_k^n$. Also note that every path $\gamma$, in particular every wedge, that connects a point $z \in F_n \cap D_k^n$ and a point $w \in K_n$ must intersect one of the arcs $\mathcal{C}_l$ in $L^*_{n,k}$, because $d(z,K_n)\ge\delta_n>\epsilon_z$ in view of (\ref{eq2.5}), and $w\in K_n\subset\mathbb C\setminus D^n_k$. The open set $\CC \setminus L_{n,k}^*$ has only finitely many, say $N\ge 2$, components as each of the $\mathcal{C}_l$ belongs to the boundary of two of these components. We will replace $L_{n,k}^* $ by $L_n\cap D_k^n$ in the following way to achieve that $\CC \setminus (L_n \cap D_k^n)$ is connected and the properties (a), (b) and (c) are kept.

To do this, note that $\mathcal{C}_1$  belongs to the boundary of two components of $\CC \setminus L_{n,k}^* $. Pick a point $a$ of $\mathcal{C}_1$ that is not an endpoint of this closed arc. Since $a\in \fint K_{n+1}\setminus K_n$, clearly $a \notin F_{n+1}\supset F_n$, and again since $a$ lies in the relative interior of the arc $\mathcal{C}_1$, we see that for all $r > 0$ small enough, $\overline{B (a,r)} \cap K_n = \emptyset = \overline{B(a,r)} \cap F_n$ and $\overline{B(a,r)}\cap L_{n,k}^*$ is contained in the relative interior of $\mathcal{C}_1$. 

We apply Lemma \ref{boogsnijlemma} with $D=B(a,r)$, $U=\fint K_{n+1}\cap B(a,r)$, $r_j<1/n$ and obtain a compact $\mathcal{C}_1'$. Then $\CC\setminus (\mathcal{C}_1'\cup\bigcup_{l=2}^M\mathcal{C}_l)$ has $N-1$ components. 
If $\mathcal{C}_2$ belongs to the boundary of two different components of $\CC\setminus (\mathcal{C}_1'\cup\bigcup_{l=2}^M\mathcal{C}_l)$ we replace it by $\mathcal{C}'_2$ likewise, reducing the number of components of the complement by one again. If $C_2$ belongs to the boundary of only one component, we just put $\mathcal{C}'_2= \mathcal{C}_2$. Proceeding in this way, we end with $L_n\cap D_k^n:=\bigcup_{l=1}^M \mathcal{C}'_l$, the complement of which is connected. 
By setting $L_n := \bigcup_{k=1}^n (L_n \cap D_k^n)$, we now find that any two points $w, z \in \CC \setminus (K_n \cup L_n)$ that are in the same component of $\CC \setminus K_n$ are in the same component of $\CC \setminus (K_n \cup L_n)$ as well, while Lemma \ref{boogsnijlemma} guarantees that (f) is satisfied. Also, note that $L_n \subset\fint K_{n+1} \setminus K_n$, so that $K_n \cap L_n = \emptyset$. It follows that $d(K_n, L_n) > 0$ for every $n \geq 1$.
We have obtained sequences $(K_n)_{n \geq 1}$ and $(L_n)_{n \geq 1}$ of compact sets that satisfy the properties (a) -- (f).

Because of property (b), we can
apply Runge's theorem recursively for each $n$ to the holomorphic function $g_n$ that equals $0$ on an open set containing $K_n$ and is equal to  $\sum_{j=1}^{n-1}\max_{z\in L_n}\{|R_j(z)|\}+n+1$ on an open set containing $L_n$. Thus there exists for each $n$ a rational function $R_n$ with 
\begin{align}
\label{r1} |R_n|&<1/2^n,\quad\text{on  $K_n$};\\
\label{r2}  |R_n|&>\sum_{j=1}^{n-1}\max_{z\in L_n}\{|R_j(z)|\}+n,\quad\text{on   $L_n$},
\end{align}
while $R_n$ has at most one pole in a preassigned point in each of the bounded components of $\mathbb C\setminus (K_n\cup L_n)$. Because of property (e) these poles may be taken from $\mathbb C\setminus U$. Set 
\[f_k= \sum_{j=1}^k R_j.	\]
Then $f_k$ clearly satisfies (1) and (2).
It now follows from (1) and (a) that there exists a function $f$ on $U$ such that $f_k \to f$ uniformly on any $K_n$. Note that $f$ is finely holomorphic on $U$, because if $z \in U$, then $z \in K_n$ for some $n \geq 1$ and, by property (a), $K_{n+1}$ is a compact fine neighborhood of $z$ on which $f_k \to f$ uniformly. Furthermore, it follows from (2) and (c) above that for any $n \geq 1$ and  $z\in L_n$ 
\[|f_n(z)| \geq |R_n(z)|-\sum_{j=1}^{n-1}|R_j(z)|> n.\] 
Finally, $|f|> n-1/2^n$ follows immediately from (1) and (2). 
\end{proof}

We shall prove in Theorem \ref{Ufinecompactexhaustion} that  under suitable conditions on the fine domain, the function that we have just constructed admits no finely holomorphic extension outside the domain. We need some facts about (regular) fine domains.

It is known that every regular fine domain $U$ is an $F_\sigma$ because $\mathbb C\setminus U$ is a base in the sense of Brelot, and hence a $G_\delta$. Moreover, we need the following Lemma, which is  (iv) of \cite[Theorem 7.3.11]{ArGa}.

\medskip\noindent
\begin{lemma} \label{lemma1.1}Every finely closed set $A\subset\mathbb C$ is the disjoint union of a (Euclidean) $F_\sigma$ and a polar set.\end{lemma}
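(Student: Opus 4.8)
The plan is to split $A$ into a finely perfect ``core'' and a set of finely isolated points. Write $b(A)$ for the base of $A$, i.e.\ the set of points at which $A$ is not thin. Since $A$ is finely closed, $\fcl A=A$; and $b(A)\subseteq\fcl A$, because if $A$ is non-thin at $z$ then, whether or not $z\in A$, every fine neighbourhood of $z$ meets $A$. Hence $b(A)\subseteq A$. Put $P:=A\setminus b(A)$. Every $z\in P$ is a point of $A$ at which $A$ is thin, i.e.\ $z\notin\fcl{(A\setminus\{z\})}$, so $P$ is exactly the set of finely isolated points of $A$.

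First I would prove that $P$ is polar. For each $z\in P$ pick a finely open set $U_z\ni z$ with $U_z\cap A=\{z\}$; this is possible because $z\notin\fcl{(A\setminus\{z\})}$ and $z\in A$. The union $\bigcup_{z\in P}U_z$ is finely open, so by the quasi-Lindel\"of property of the fine topology it equals $\bigcup_nU_{z_n}$ together with a polar set $N$, for some countable subset $\{z_n\}\subseteq P$. Any $z\in P$ lying in some $U_{z_n}$ satisfies $z\in U_{z_n}\cap A=\{z_n\}$, hence $z=z_n$; therefore $P\subseteq\{z_n:n\in\NN\}\cup N$, which is polar.

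Next I would show that $b(A)$ differs from a Euclidean $F_\sigma$ set only by a polar set. Observe first that $b(A)$ is Borel: on any open ball $B$ one has $b(A)\cap B=\{z\in B:\hat R^{A\cap B}_1(z)=1\}$ (thinness being local), and the regularized reduced function $\hat R^{A\cap B}_1$ is superharmonic, hence lower semicontinuous, so this set is a $G_\delta$. Fix closed balls $\overline B_k$, $k\in\NN$, covering $\CC$. Each $b(A)\cap\overline B_k$ is a bounded Borel set, hence capacitable, so by inner regularity of logarithmic capacity there are compacts $L^k_1\subseteq L^k_2\subseteq\cdots\subseteq b(A)\cap\overline B_k$ with $\Capa(L^k_n)\to\Capa(b(A)\cap\overline B_k)$. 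Put $S:=\bigcup_{k,n}L^k_n$, a Euclidean $F_\sigma$ set contained in $b(A)$. Then $\Capa(S\cap\overline B_k)=\Capa(b(A)\cap\overline B_k)<\infty$ for every $k$, and since a capacitable set and a subset of equal finite capacity differ only by a polar set, $(b(A)\setminus S)\cap\overline B_k$ is polar; hence $b(A)\setminus S$ is polar.

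Combining the three steps, $A=S\sqcup\bigl(P\cup(b(A)\setminus S)\bigr)$, where $S$ is a Euclidean $F_\sigma$ set and $P\cup(b(A)\setminus S)$ is polar, and the union is disjoint because $S\subseteq b(A)$ meets neither $P=A\setminus b(A)$ nor $b(A)\setminus S$. This is the asserted decomposition. The step I expect to be the main obstacle is the third one: extracting the $F_\sigma$ set from the base, which rests on the measurability of $b(A)$ (so that Choquet capacitability and inner regularity of capacity apply) together with the fact that a capacitable set and a subset of the same finite capacity differ only by a polar set. One can avoid introducing $b(A)$ and run the capacity argument on $A$ itself, but then one must separately know that $A$ is ``nearly Borel'', which follows from the fact that every finely open set is of the form (Euclidean $F_\sigma$)\,$\cup$\,(polar), a basic finely open set $\{z\in B:h(z)>0\}$ with $h$ subharmonic being already a Euclidean $F_\sigma$ because $h$ is upper semicontinuous.
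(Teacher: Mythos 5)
The paper does not actually prove this lemma; it is quoted as \cite[Theorem 7.3.11 (iv)]{ArGa}, so your argument has to stand on its own. Your first two steps are correct and are indeed the standard beginning: for finely closed $A$ one has $b(A)\subseteq\fcl A=A$, the set $P=A\setminus b(A)$ is exactly the set of finely isolated points of $A$, and your quasi-Lindel\"of argument shows $P$ is polar. (One repair is needed in the measurability claim: the formula $b(A)\cap B=\{z\in B:\widehat R^{\,A\cap B}_1(z)=1\}$ is false as stated --- if $A$ contains an annulus centred at $z$ and contained in $B$, then $\widehat R^{\,A\cap B}_1(z)=1$ even though $A$ is thin at $z$. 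One must localize over a countable base $\{\omega_j\}$ of open sets, $b(A)=\bigcap_j\bigl((\CC\setminus\omega_j)\cup\{\widehat R^{\,A\cap\omega_j}_1=1\}\bigr)$, which still gives a $G_\delta$, hence Borel, so this slip is harmless for what follows.)

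The third step, which you yourself flag as the main obstacle, is where the proof genuinely breaks down. The assertion that a capacitable set and a subset of equal finite capacity differ only by a polar set is false for logarithmic (or Newtonian) capacity: take $W=\overline{\DD}$ and $T=\partial\DD$; then $T\subseteq W$ and $\Capa(T)=\Capa(W)=1$, yet $W\setminus T$ is the open unit disc, which is certainly not polar. The obstruction is structural: capacity is only subadditive, $\Capa(W)\le\Capa(T)+\Capa(W\setminus T)$, which is the wrong inequality --- unlike for a measure, exhausting the capacity of $b(A)\cap\overline{B}_k$ by compacts $L^k_n$ gives no upper bound on the outer capacity of what is left over, and the same difficulty defeats any variant using a reduction-based Choquet capacity (the disc and its boundary circle have identical reduced functions). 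Note also that $\overline{\DD}$ is itself a base, so having passed from $A$ to $b(A)$ does not exclude this kind of example. In short, your reduction of the lemma to the statement ``every base is an $F_\sigma$ up to a polar set'' is fine, but the capacity argument does not prove that statement; a genuinely different idea is needed for this half, which is the content of the cited \cite[Theorem 7.3.11]{ArGa}.
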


\begin{lemma}\label{lem2.7} Let $\emptyset\ne U\subset V$ be fine domains and let $E$ be a polar subset of $\CC\setminus U$. Then either there exists a point $a\in (\partial_f U\cap V)\setminus E$, or $V=U\cup e$ for the polar set $e:=(V\setminus U)\cap E$, whereby $e$ is empty and hence $U=V$ in case $U$ is a regular fine domain.
\end{lemma}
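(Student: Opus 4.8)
The plan is to argue by contradiction on the first alternative. Suppose no point $a\in(\partial_f U\cap V)\setminus E$ exists, i.e.\ $(\partial_f U\cap V)\subset E$. Since $U$ is finely open, $V\setminus U=V\setminus(\fint U)$ decomposes into the fine interior part of $\CC\setminus U$ inside $V$ together with $\partial_f U\cap V$; but the former is a finely open subset of $V$ disjoint from $U$, and if it were non-empty it would, by fine connectedness of $V$, force $\partial_f U\cap V$ to be non-empty and non-polar (a finely open set is never polar, since polar sets are finely discrete, and $V$ being finely connected cannot be split by a polar set — this is the quasi-Lindelöf/connectedness input). So first I would show that $\fint(\CC\setminus U)\cap V=\emptyset$: otherwise $V$ would be the disjoint union of the non-empty finely open set $U$, the non-empty finely open set $\fint(\CC\setminus U)\cap V$, and the set $\partial_f U\cap V\subset E$, contradicting that a finely connected set cannot be written as the union of two non-empty disjoint finely open sets and a polar set (a consequence of local connectedness, Theorem \ref{locallyconnected}, together with the fact that polar sets are finely totally disconnected). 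Hence $V\setminus U\subset\partial_f U\cap V\subset E$, so $e:=(V\setminus U)\cap E=V\setminus U$ is polar and $V=U\cup e$.

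Next I would check that $e$ is polar: it is a subset of the polar set $E$, so this is immediate. The remaining point is that $U=V$ when $U$ is regular. Here I would use the characterization of regular fine domains: $U$ regular means $U=\fint(\fcl U)$, equivalently $U$ is finely open and $\CC\setminus U$ is "thin at none of its own points" in the appropriate sense — more precisely, that $\fint U$ coincides with $U$ and $U$ has no polar "gaps", i.e.\ if $e$ is polar and $U\cup e$ is finely open then $e\subset U$. Concretely: since $V=U\cup e$ is finely open and $e$ is polar, every point $a\in e$ lies in $V$, hence has a fine neighborhood $W\subset V$; then $W\setminus\{a\}\subset W\setminus e\subset U$ up to the remaining polar set $e\setminus\{a\}$, and since a single point together with a fine neighborhood punctured at that point still gives a fine neighborhood (polar sets being finely isolated, $e$ is finely discrete so we may shrink $W$ to have $W\cap e=\{a\}$), we get that $a$ is a finely isolated boundary-type point that is "filled in." For a regular fine domain this cannot happen: regularity of $U$ precisely says $U$ is equal to the fine interior of its fine closure, so it cannot be enlarged by adjoining a polar set while staying finely open. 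Thus $e=\emptyset$ and $U=V$.

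The main obstacle I expect is making the separation argument in the first paragraph fully rigorous: one must know that a finely connected set $V$ cannot be partitioned as $A\cup B\cup P$ with $A,B$ non-empty disjoint finely open and $P$ polar. This is where Theorem \ref{locallyconnected} (local connectedness of the fine topology) enters, combined with the fact that polar sets are finely totally disconnected (indeed finely discrete) and hence cannot "connect" $A$ to $B$; one typically argues that $A\cup P$ and $B$ are separated, or passes to fine components. I would state this as an auxiliary observation and prove it by noting that $V\setminus P$ is finely open and finely dense in $V$, that its fine components are finely open (by local connectedness), and that the two finely open sets $A$, $B$ are each unions of such components, so $A$ and $B$ are both finely open \emph{and} finely closed in $V\setminus P$; fine connectedness of $V\setminus P$ — which follows because adding or removing a polar set does not disconnect a finely connected set, again by local connectedness — then forces one of them to be empty.

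Some care is also needed with the precise definition of "regular fine domain" the authors intend (presumably $U=\fint(\fcl U)$); once that is pinned down, the implication $e=\emptyset$ is short. I would keep that part brief and refer to the characterization that a regular fine domain admits no polar enlargement within the finely open sets, citing the relevant fact about bases in the sense of Brelot already invoked just before the lemma.
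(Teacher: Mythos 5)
Your argument for the dichotomy is essentially the paper's, in different packaging. The paper observes that if $(\partial_f U\cap V)\setminus E=\emptyset$ then $\partial_f U\cap(V\setminus e)=\emptyset$, so $U$ is a non-empty finely open and relatively finely closed subset of $V\setminus e$, which is finely connected along with $V$ by \cite[Theorem 12.2]{Fu}; hence $U=V\setminus e$. Your three-way decomposition $V=U\cup\bigl(\fint(\CC\setminus U)\cap V\bigr)\cup(\partial_f U\cap V)$ together with the auxiliary claim that a fine domain cannot be split into two non-empty disjoint finely open pieces plus a polar set amounts to the same thing: that auxiliary claim is precisely the cited fact that deleting a polar set does not destroy fine connectedness, so there is no need to re-derive it from local connectedness, though your sketch of that derivation is sound.

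The one substantive caveat is the regularity step. In this paper (following Fuglede) a \emph{regular} finely open set is one whose complement is a base in the sense of Brelot, i.e.\ $\CC\setminus U$ is non-thin at each of its own points; this is strictly weaker than your proposed characterization $U=\fint(\fcl U)$. For example $U=B(0,1)\setminus[0,\tfrac12]$ is a regular fine domain, yet $\fint(\fcl U)=B(0,1)\ne U$. So an argument resting on the identity $U=\fint(\fcl U)$ would only cover a smaller class than the lemma claims. Fortunately the computation you actually carry out already yields the right conclusion for the base definition: given $a\in e$, you produce a fine neighborhood $W\subset V=U\cup e$ of $a$, whence $\CC\setminus U\subset(\CC\setminus W)\cup e$ is thin at $a$ (the complement of a fine neighborhood union a polar set), contradicting that $\CC\setminus U$ is non-thin at each of its points. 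That two-line thinness argument is exactly the paper's justification (``$e$ and $U$ are disjoint and $V$ is finely open''); substitute it for the appeal to $U=\fint(\fcl U)$ and the proof is complete.
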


\begin{proof} 
If $(\partial_fU\cap V)\setminus E$ is empty then $(\partial_fU\cap V)\subset E\cap(V\setminus U)=e$, hence the fine boundary $\partial_fU\cap(V\setminus e)$ of $U$ relative to $V\setminus e$ is empty. Since $V\setminus e$ is finely open and finely connected along with $V$, by \cite[Theorem 12.2]{Fu}, and since $U\ne\emptyset$ this means that $U=V\setminus e$, that is, $V=U\cup e$ as claimed. If $U$ is regular then $e$ is empty because $e$ and $U$ are disjoint and $V$ is finely open.
\end{proof}

\begin{theorem}\label{Ufinecompactexhaustion}
Let $U \subset \CC$ be a non-empty fine domain that is either (1) both Euclidean $F_\sigma$ and  Euclidean $G_\delta$, or (2) a regular fine domain. Then $U$ is a fine domain of existence.
\end{theorem}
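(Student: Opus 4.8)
The plan is to apply Proposition~\ref{prop2} to produce the candidate finely holomorphic function $f$ on $U$ and then show that, under either hypothesis, $f$ cannot be finely holomorphically continued across any fine boundary point. First I would dispose of the set-up: in case (1) the hypothesis is exactly the running hypothesis of Proposition~\ref{prop2} (Euclidean $F_\sigma$), while in case (2) a regular fine domain is automatically Euclidean $F_\sigma$ (its complement is a base, hence $G_\delta$), so Proposition~\ref{prop2} applies in both cases. Choose an increasing exhaustion $\emptyset\neq F_1\subset F_2\subset\cdots$ of $\CC\setminus U$ by compact sets (if $\CC\setminus U$ is empty then $U=\CC$ has empty fine boundary and there is nothing to prove), apply Proposition~\ref{prop2} to get the sequences $(K_n)$, $(L_n)$ and the finely holomorphic limit $f$ on $U$ with $|f|>n-1/2^n$ on $L_n$.

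Next comes the core argument. Suppose, for contradiction, that $f$ extends finely holomorphically: there is a fine domain $V$ meeting $\partial_f U$, a fine component $\Omega$ of $V\cap U$, and a finely holomorphic $g$ on $V$ with $g|_\Omega=f|_\Omega$. In case (2), Lemma~\ref{lem2.7} with $E=\emptyset$ forces the existence of a point $a\in(\partial_f U\cap V)$; in case (1), write $\CC\setminus U$ (which is finely closed because $U$ is finely open, and is also Euclidean $G_\delta$) — here I would instead use Lemma~\ref{lemma1.1} on the finely closed complementary-in-$V$ pieces, or more directly: since $U$ is Euclidean $G_\delta$, $\CC\setminus U$ is Euclidean $F_\sigma$, hence $(V\setminus U)$ is a Euclidean $F_\sigma$ too, so if it were polar it would be a countable union of compact polar sets, and one applies Lemma~\ref{lem2.7} with $E=V\setminus U$ (a polar $F_\sigma$) to again obtain a non-polar boundary point, or rules out $V=U\cup e$ because a polar $e$ cannot be a nonempty relatively-open-in-$V$... — in any event the conclusion to extract is: there is a point $a\in\partial_f U\cap V$ that is \emph{not} an isolated point of $\CC\setminus U$ in the relevant sense, equivalently $a$ lies in the Euclidean closure of $\CC\setminus U$. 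Now localize: $g$ is bounded on a fine neighborhood $W\subset V$ of $a$, say $|g|\le M$ on $W$. Because $a\in\partial_f U$, $a$ is a finely-limit point of $U$, and by fine local connectedness (Theorem~\ref{locallyconnected}) one arranges that $W\cap\Omega$ accumulates at $a$; but then $f=g$ on a fine neighborhood basis approaching $a$ inside $U$, so $|f|\le M$ on $W\cap U$.

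The contradiction is engineered through property (f) of Proposition~\ref{prop2}. Since $a$ is in the Euclidean closure of $\CC\setminus U=\bigcup F_n$, fix $n$ with $a$ within Euclidean distance, say, $\tfrac{1}{10n}$ of a point of $F_n$ and with $\tfrac1n$ smaller than the scale on which $W$ is a fine neighborhood and $|g|\le M$ there; by Theorem~\ref{wedgetheorem} applied at $a$ there is a fine neighborhood of $a$ in which nearby points of $U$ are joined to points near $F_n$ by short wedges, and by Lemma~\ref{Lem1} there are circles $C(a,t)\subset W$. A point of such a circle that lies in $U$, joined by a wedge of length at least $1/n$ to a point of $F_n$, must by (f) meet $L_n$ — so $L_n\cap W\cap U\neq\emptyset$, giving a point $z$ with $|f(z)|>n-1/2^n$. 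Choosing $n>M+1$ at the outset contradicts $|f|\le M$ on $W\cap U$. The main obstacle I expect is the bookkeeping in this last step: one must simultaneously arrange that the relevant circle/wedge lies inside $W$ (scale controlled by Lemma~\ref{Lem1} and Theorem~\ref{wedgetheorem}), that it genuinely meets $U$ at a point from which a wedge of length $\ge 1/n$ reaches $F_n$ (this is where $a$ being a \emph{non-polar}, Euclidean-accumulation boundary point is essential, ruling out the $\CC\setminus Q$-type counterexamples of Section~\ref{sec3}), and that the same $n$ beats the bound $M$ — so the choice of $n$ has to be made after $M$, $a$, and $W$ are fixed, and the geometric estimates in Lemma~\ref{boogsnijlemma}/(f) have to be uniform enough to survive. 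Handling case (1) versus case (2) uniformly via Lemma~\ref{lem2.7} (to guarantee a non-polar $a$) is the other point requiring care.
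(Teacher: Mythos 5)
Your overall strategy---build $f$ via Proposition~\ref{prop2}, assume a finely holomorphic extension $g$ on $V$ of $f|_\Omega$, locate a suitable fine boundary point, bound $g$ on a compact fine neighbourhood, and contradict the bound using property (f) and a wedge meeting $L_n$---is exactly the paper's strategy. But two steps, as you have written them, would fail.

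First, in case (2) you cannot ``choose an increasing exhaustion of $\CC\setminus U$ by compact sets.'' A regular fine domain is Euclidean $F_\sigma$ precisely because its complement is a base, hence a $G_\delta$; a $G_\delta$ need not be $\sigma$-compact. The paper instead uses Lemma~\ref{lemma1.1} to write $\CC\setminus U=F\cup E$ with $F=\bigcup_n F_n$ a Euclidean $F_\sigma$ and $E$ polar, and feeds only these $F_n$ into Proposition~\ref{prop2}. This is the whole reason Lemma~\ref{lem2.7} enters: applied to $\Omega$ (using $\partial_f\Omega\cap V\subset\partial_f U\cap V$), it produces $z_0\in(\partial_f\Omega\cap V)\setminus E$, hence $z_0\in F$, hence $z_0\in F_n$ for all large $n$. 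Your substitute characterization---``$a$ lies in the Euclidean closure of $\CC\setminus U$''---is vacuous (every fine boundary point of the finely open set $U$ already lies in $\CC\setminus U$), and your fallback ``fix $n$ with $a$ within $1/(10n)$ of a point $p\in F_n$'' does not repair it: Theorem~\ref{wedgetheorem} only guarantees that a wedge joining two points \emph{of the small neighbourhood $V_1$} stays inside $V_0$, and the nearby point $p\in F_n$ need not lie in $V_1$, so the wedge from $w_0$ to $p$ may leave the set on which $|g|\le M$, destroying the contradiction. You need the boundary point itself to lie in some $F_n$, which is what Lemma~\ref{lemma1.1} together with Lemma~\ref{lem2.7} delivers; then the wedge from $w_0\in K_n\cap V_1$ to $z_0\in F_n\cap V_1$ works as in the paper.

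Second, ``$|f|\le M$ on $W\cap U$'' is an overreach. The extension $g$ agrees with $f$ only on the chosen fine component $\Omega$ of $V\cap U$, so the bound $|g|\le M$ on $W$ transfers to $f$ only on $W\cap\Omega$. The point $z\in L_n$ produced by property (f) lies in $U\cap V_0$, and to conclude $|f(z)|=|g(z)|\le M$ you must place it in $\Omega$; the paper keeps track of $\Omega$ throughout (choosing $w_0$ on a path inside $\Omega$, taking $z_0\in\partial_f\Omega$) for exactly this reason. A smaller point: your handling of case (1) via ``$E=V\setminus U$ a polar $F_\sigma$'' is not the right move; there one simply has $E=\emptyset$ in the decomposition of $\CC\setminus U$, and Lemma~\ref{lem2.7} gives $z_0\in\partial_f\Omega\cap V$ directly, the alternative $V=\Omega$ being impossible since $V$ meets $\partial_f U$.
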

\begin{proof}
\medbreak In both cases $U$ is an $F_\sigma$, and we can write  $U=\bigcup_{n=1}^\infty K_n$ with $K_j\subset K_{j+1}$ compact subsets of $U$. Because of Lemma \ref{lemma1.1}, $\CC \setminus U=F\cup E$, where $F=\bigcup_{n=1}^\infty F_n$ with $F_j\subset F_{j+1}$ compact in $\CC\setminus U$, is a Euclidean $F_\sigma$ and $E$ is a polar set which we can assume  to be empty in Case (1).    Let $L_j\subset K_{j+1}$ be compact sets and $f$ a finely holomorphic function on $U$ as constructed in Proposition \ref{prop2}. 

Let $V$ be a fine domain that meets $\partial_fU$ and let $\Omega$ be a fine component of $U\cap V$. Then $\partial_f\Omega\cap V\subset \partial_f U\cap V$, because if $x\in \partial_f\Omega\cap V\cap U$ there would be a finely connected fine neighborhood of $x$ contained in $V\cap U$, contradicting that $\Omega$ is a fine component of $V\cap U$.
Suppose that $f|_\Omega$ admits a finely holomorphic extension $\tilde{f}$ defined on $V$. 
By Lemma \ref{lem2.7} applied to $\Omega$ in place of $U$, there is a point $z_0\in(\partial_f\Omega\cap V)\setminus E$, whereby in Case (1) $E=\emptyset$ and hence $z_0\in(\partial_f\Omega\cap V)\subset\partial_f U\cap V\subset F$. In Case (2),  if $V=\Omega\cup e$, then $U\cup V=U\cup e$, which is impossible because $U$ is a regular domain. Hence $z_0\in(\partial_f\Omega\cap V)\setminus E=(\partial_fU\cap V)\setminus E\subset F$. 

Again by Proposition \ref{prop2} there is a compact fine neighborhood $V_0$ of $z_0$ in $V$ on which $\tilde{f}$ is the uniform limit of rational functions with poles off $V_0$. In particular, $\tilde{f}$ is continuous and bounded on $V_0$, say $|\tilde{f}| \leq M$ on $V_0$. Choose a fine domain $V_1\subset V_0$ containing $z_0$ and such that, for given $\alpha>1$, any two distinct points $a,b$ of $V_1$ can be joined by a wedge of total length less than $\alpha|a-b|$ contained in $V_0$, see Theorem \ref{wedgetheorem}.

Choose $z_1\in\Omega\setminus\{z_0\}$. By the above and by Lemma \ref{Lem1}, there exists for every $\epsilon>0$ an $0<r_0 <\epsilon$ with $r_0<|z_1-z_0|$ such that $C(z_0, r_0) \subset V_1$. 
Choose $z_2\in\Omega$ with $|z_2-z_0|<r_0$ (possible since $z_0\in\partial_f\Omega\subset\partial\Omega$). There is a path $\gamma\subset\Omega$ joining $z_1$ and $z_2$ and meeting $C(z_0,r_0)$ at a point $w_0$ of $\Omega\cap V_1$. Thus $C(z_0,r_0)\cap\Omega\ne\emptyset$. Recall that also $z_0\in V_1$.
Let $l_0 = [w_0, b_0] \cup [b_0, z_0]$ with $|w_0 - b_0| = |b_0 - z_0|$ be a wedge of total length less than $\alpha|w_0 - z_0|$ contained in $V_0$. Since $w_0 \in U$, we have $w_0 \in K_n$ for all $n$ large enough. Also, $z_0 \in F$, hence $z_0 \in F_n$ for all $n$ large enough. Therefore we can take an $n \geq 1$ large enough such that $w_0 \in K_n$ and $z_0 \in F_n$ and such that $n-1/2^{n-1}>M$  and $4/n < |w_0 - b_0| + |b_0 - z_0|$ hold simultaneously. Then by Proposition \ref{prop2} (f), $l_0$ meets $L_n\cap\Omega$.

Let $z\in l_0 \cap L_n\cap\Omega$. Since $z \in l_0 \cap\Omega\subset V_0$, we have $|\tilde{f}(z)| \leq M$. On the other hand, since $z \in L_n \cap\Omega$, we have $\tilde{f}(z) = f(z)$ and by (2) in Proposition \ref{prop2} $|\tilde{f}(z)| = |f(z)| \geq n-1/2^{n-1} > M$, which is a contradiction. We conclude that $f$ does not admit a finely holomorphic extension and that $U$ is a fine domain of existence.
\end{proof}

In the rest of this section we  extend the above theorem a little.

\begin{lemma}
Let $U_1$ and $U_2$ be fine domains of existence, and suppose that $(\partial_fU_1)\cap(\partial_fU_2)=\emptyset$. Then every fine component of $U_1\cap U_2$ is a fine domain of existence.
\end{lemma}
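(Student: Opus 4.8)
The plan is to reduce the statement to Definition \ref{Def.existence} by producing, for each fine component $\Omega$ of $U_1\cap U_2$, a finely holomorphic function on $\Omega$ that witnesses it being a fine domain of existence. The natural candidate is the restriction to $\Omega$ of one of the two given witness functions $f_1$ on $U_1$ or $f_2$ on $U_2$, but we first need to understand how the fine boundary of $\Omega$ sits relative to $\partial_f U_1$ and $\partial_f U_2$. Since $\Omega$ is a fine component of the finely open set $U_1\cap U_2$, the argument already used in the proof of Theorem \ref{Ufinecompactexhaustion} shows $\partial_f\Omega\subset\partial_f(U_1\cap U_2)\subset(\partial_f U_1)\cup(\partial_f U_2)$. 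The hypothesis $(\partial_f U_1)\cap(\partial_f U_2)=\emptyset$ then gives a disjoint decomposition $\partial_f\Omega=(\partial_f\Omega\cap\partial_f U_1)\sqcup(\partial_f\Omega\cap\partial_f U_2)$, and moreover a point of $\partial_f U_1$ cannot lie in $U_2$ in a way that puts it on $\partial_f\Omega$ without being a fine interior point of $U_2$; I would record the cleaner fact that $\partial_f\Omega\cap\partial_f U_i\subset\partial_f U_i$ while near such a point the \emph{other} domain contains a full fine neighborhood.

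The key step is then the following dichotomy for a fine domain $V$ meeting $\partial_f\Omega$: either $V$ meets $\partial_f U_1$, or $V$ meets $\partial_f U_2$ (or both), simply because $V\cap\partial_f\Omega\neq\emptyset$ and $\partial_f\Omega$ is covered by these two fine boundaries. Suppose $V$ meets $\partial_f U_1$ at a point $a\in\partial_f\Omega\cap\partial_f U_1$. Let $\Omega'$ be a fine component of $V\cap\Omega$; since $\Omega\subset U_1$ we have that $\Omega'$ is contained in a fine component $\Theta$ of $V\cap U_1$. I claim $\Omega'=\Theta\cap(\text{fine component of }V\cap U_2\text{ through }\Omega')$, and more usefully that if $f_1|_{\Omega}$ extended finely holomorphically to $V$, then in particular it would extend on $\Theta$; but actually the right move is: because $a\notin U_2$ would contradict $a\in\partial_f U_1$ disjoint from $\partial_f U_2$ only if $a\in\fint U_2$, one checks that near $a$ the set $V\cap U_2$ contains a fine neighborhood of $a$, so that a fine component of $V\cap\Omega=V\cap U_1\cap U_2$ abutting $a$ actually agrees, near $a$, with a fine component of $V\cap U_1$. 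Hence an extension of $f_1|_{\Omega'}$ to $V$ would restrict to an extension of $f_1$ from that fine component of $V\cap U_1$ to $V$, contradicting that $U_1$ is a fine domain of existence with witness $f_1$. The symmetric argument with $f_2$ handles the case where $V$ meets $\partial_f U_2$.

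The main obstacle is the bookkeeping in the previous paragraph: making precise that a fine component $\Omega'$ of $V\cap U_1\cap U_2$ touching a point $a\in\partial_f U_1\setminus\partial_f U_2$ coincides \emph{locally near $a$} with a fine component of $V\cap U_1$, so that extendability across $a$ for $f_1|_{\Omega'}$ transfers to extendability for $f_1$ on a fine component of $V\cap U_1$. This uses that $a\in\fint U_2$ (because $a\notin\partial_f U_2$ and $a\in\fcl\Omega\subset\fcl U_2$), hence $U_2$ contains a finely connected fine neighborhood $W$ of $a$ by Theorem \ref{locallyconnected}; then $V\cap U_1\cap U_2\supset (V\cap U_1)\cap W$ near $a$, and a finely holomorphic extension $\tilde f_1$ of $f_1|_{\Omega'}$ to $V$ restricts to $f_1$ on the relevant fine component of $V\cap U_1$ after using the identity theorem (Taylor expansion uniqueness) on the overlap $\Omega'\cap(V\cap U_1)$, which is nonempty and finely open. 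One also has to note there are only two witness functions but possibly many fine components $\Omega$ of $U_1\cap U_2$; this is harmless, since the definition of fine domain of existence is per-domain and we simply assign to each such $\Omega$ whichever of $f_1|_\Omega,f_2|_\Omega$ is needed—in fact both work, and one may just fix $f_1|_\Omega$ and argue as above, swapping to $f_2$ only in the subcase $V\cap\partial_f U_1=\emptyset$, $V\cap\partial_f U_2\neq\emptyset$, where an extension of $f_1|_{\Omega'}$ would have to be re-derived; cleanest is to run the whole argument once for each of the two possible locations of the boundary point and conclude $\Omega$ is a fine domain of existence via $f_1|_\Omega$ when $a\in\partial_f U_1$ and via $f_2|_\Omega$ when $a\in\partial_f U_2$, which is legitimate because Definition \ref{Def.existence} only demands the existence of \emph{some} witness and here the choice may depend on $V$—so to be safe I would instead argue that for the single function $f_1|_\Omega$ any such $V$ meeting $\partial_f\Omega$ meets $\partial_f U_1$ \emph{or} leads, via the $U_2$-side, to a contradiction with $U_2$ being a domain of existence after transferring the extension through the local coincidence of fine components, exactly as above.
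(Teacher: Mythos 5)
Your geometric analysis is essentially the paper's: $\partial_f\Omega\subset\partial_fU_1\cup\partial_fU_2$ disjointly, and a point $a\in\partial_f\Omega\cap\partial_fU_1$ lies in the finely open set $U_2$, so after shrinking $V$ to a finely connected fine neighborhood of $a$ inside $V\cap U_2$ one gets $V\cap U_1\cap U_2=V\cap U_1$, whence the relevant fine component of $V\cap\Omega$ is a fine component of $V\cap U_1$. The genuine gap is in the choice of witness function. Definition \ref{Def.existence} requires a \emph{single} finely holomorphic $f$ on $\Omega$ that works for \emph{every} fine domain $V$ meeting $\partial_f\Omega$, so you may not switch between $f_1|_\Omega$ and $f_2|_\Omega$ according to whether $V$ meets $\partial_fU_1$ or $\partial_fU_2$. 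You notice this, but your fallback --- keep only $f_1|_\Omega$ and claim that an extension of it across a point of $\partial_fU_2$ ``leads to a contradiction with $U_2$ being a domain of existence'' --- does not work: such a point $a$ lies in $\partial_fU_2\setminus\partial_fU_1$ and in $\fcl U_1$, hence in the finely open set $U_1$, so $f_1$ is already finely holomorphic on a full fine neighborhood of $a$; if $V\subset U_1$ then $f_1|_V$ \emph{is} a finely holomorphic extension of $f_1|_{\Omega'}$ to $V$. No contradiction with $U_2$ being a domain of existence can be extracted from an extension of $f_1$; only an extension of $f_2$ would give one. So $f_1|_\Omega$ alone is simply not a valid witness.

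The missing idea --- and the paper's entire proof beyond the reduction you already have --- is to take as the single witness the \emph{sum} $h_0:=h_1|_{U_0}+h_2|_{U_0}$ on $U_0=U_1\cap U_2$, where $h_i$ witnesses that $U_i$ is a fine domain of existence. If $V$ meets $U_2\cap\partial_fU_1$ and is shrunk into $U_2$, then $h_2$ is finely holomorphic on all of $V$, so $h_0|_\Omega$ extends finely holomorphically to $V$ if and only if $h_1|_\Omega$ does, and the latter is excluded because $\Omega$ is then a fine component of $V\cap U_1$; the case where $V$ meets $U_1\cap\partial_fU_2$ is symmetric. With this one change your argument closes up.
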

\begin{proof}
The assertion amounts to $U_1\cap U_2$ being a ``finely open set of existence'' (if nonvoid) in the obvious sense. Because $(\partial_fU_1)\cap(\partial_fU_2)=\emptyset$ we have, writing $U_1\cap U_2=U_0$,
\[\partial_f U_0=(U_1\cap\partial_fU_2)\cup(U_2\cap\partial_fU_1).\] 
By hypothesis there exists for $i=1,2$ a finely holomorphic function $h_i$ on $U_i$ such that for any fine domain $V$ that intersects $\partial_f U_i$ and any fine component $\Omega_i$ of $U_i\cap V$, $h_i|\Omega_i$ does not extend finely holomorphically to $V$. 

Let $V$ be any fine domain that intersects $\partial_fU_0$ and let $\Omega$ be a fine component of $V\cap U_0$. Without loss of generality we may assume that $V$ intersects  $U_2\cap\partial_fU_1$, and by shrinking $V$ that $V\subset U_2$. Then $\Omega$ is a fine component of $U_1\cap V=U_1\cap(U_2\cap V)=U_0\cap V$.  

The function $h_0:=h_1|U_0+h_2|U_0$ is then finely holomorphic on $U_0$. Because $h_2$ is finely holomorphic on $V\subset U_2$, the function $h_0|_\Omega$ is extendible to $V$ 
if and only if $h_1|_\Omega$ is extendible over $V$, 
and that is not the case. Therefore $U_0$ is a finely open set of existence.
\end{proof}
\begin{theorem}\label{Thm2}
Every fine domain $U\subset\mathbb C$ such that the set $I$ of irregular fine boundary points for $U$ is both an $F_\sigma$ and a $G_\delta$ is a fine domain of existence.
\end{theorem}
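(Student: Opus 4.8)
The plan is to exhibit $U$ as the intersection of two fine domains of existence with disjoint fine boundaries and then invoke the lemma just proved. The two ingredients will be the \emph{regularisation} $U^r:=U\cup I$ of $U$ and the complement $W:=\CC\setminus I$ of the irregular set. Recall first that the irregular set $I$ is automatically polar, hence finely closed, and that a polar set is thin at every point of $\CC$.

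The first and most delicate step is to check that $U^r$ is a \emph{regular} fine domain. For each $z\in I\subset\partial_fU$ the set $\CC\setminus U$ is thin at $z$, which means precisely that $z\in\fint(U\cup\{z\})\subset\fint U^r$; together with $U\subset U^r$ this shows $U^r$ is finely open, and it is finely connected because $U$ is a finely connected, finely dense, finely open subset of it. Since $I\subset\partial_fU\subset\fcl U$ we have $\fcl{U^r}=\fcl U$, hence $\partial_fU^r=\partial_fU\setminus I$. Now the key observation: deleting a polar set from $\CC\setminus U$ affects thinness at no point of $\CC$ (a polar set is thin everywhere, and a finite union of thin sets is thin), so $\CC\setminus U^r=(\CC\setminus U)\setminus I$ is non-thin at every point at which $\CC\setminus U$ is non-thin; in particular it is non-thin at every point of $\partial_fU^r=\partial_fU\setminus I$, i.e.\ at every fine boundary point of $U^r$, since such a point is by definition a \emph{regular} boundary point of $U$. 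Hence $U^r$ is a regular fine domain, so by case (2) of Theorem \ref{Ufinecompactexhaustion} it is a fine domain of existence. (If $I=\emptyset$ then $U^r=U$ is already regular and we are done immediately; from now on we may assume $I\ne\emptyset$.)

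Next I would treat $W=\CC\setminus I$. It is finely open (complement of the finely closed set $I$) and finely connected (the complement of a polar set in $\CC$ is finely connected); moreover, since $I$ is assumed to be both $G_\delta$ and $F_\sigma$, the set $W$ is both Euclidean $F_\sigma$ and Euclidean $G_\delta$, so case (1) of Theorem \ref{Ufinecompactexhaustion} shows that $W$ is a fine domain of existence. Since $I$ is thin at each of its points, $W$ is non-thin there, so $\fcl W=\CC$ and $\partial_fW=I$. Therefore $U^r\cap W=(U\cup I)\cap(\CC\setminus I)=U$, while $\partial_fU^r\cap\partial_fW=(\partial_fU\setminus I)\cap I=\emptyset$. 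Applying the preceding lemma with $U_1:=U^r$ and $U_2:=W$, every fine component of $U^r\cap W=U$ is a fine domain of existence; as $U$ is finely connected it is that single component, and the proof is complete.

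The step I expect to require the most care is the claim that $U^r=U\cup I$ is genuinely a regular fine domain — in particular that adjoining $I$ creates no new irregular boundary points. This rests entirely on the insensitivity of thinness to polar perturbations, so I would make sure to state and use that fact cleanly (every polar set is thin at every point of $\CC$, and thinness is preserved under finite unions). Everything else — that $U^r$ and $W$ are fine domains, the identification of their fine boundaries, and the final intersection argument — is routine once that point is in place.
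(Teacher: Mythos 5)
Your proposal is correct and follows essentially the same route as the paper: take $U_1=U^r=U\cup I$ (the regularization) and $U_2=\CC\setminus I$, verify they are fine domains of existence via cases (2) and (1) of Theorem \ref{Ufinecompactexhaustion} respectively, and apply the preceding lemma to their intersection $U$. You supply somewhat more detail than the paper does (explicitly checking that $U^r$ is regular via the insensitivity of thinness to polar sets, and that $\partial_f U^r\cap\partial_f(\CC\setminus I)=\emptyset$), but the decomposition and the key steps are identical.
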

\begin{proof}
In the above lemma take $U_1=U_r$ (the regularization of $U$) and $U_2=\CC\setminus I$. 
Since $I$ is polar, $U_2$ is a fine domain along with  $\mathbb C$. Since $U$ is finely connected so is $U_r$. In fact, $U_r=U\cup E$, where $E$ denotes the polar set of finely isolated points of $\mathbb C\setminus U$. If $U_r=V_1\cup V_2$ with $V_1,V_2$ finely open and disjoint then
\[U=U_r\setminus E=(V_1\setminus E)\cup(V_2\setminus E)\]
with $V_1\setminus E$ and $V_2\setminus E$ finely open and disjoint. It follows that for example $V_1\setminus E=\emptyset$ and hence $V_1=\emptyset$, showing that $U_r$ indeed is finely connected.
By hypothesis, the complement of $U_2$ and hence $U_2$ itself is an $F_\sigma$ and a $G_\delta$.  Then by Theorem \ref{Ufinecompactexhaustion} $U_1$ and $U_2$ are  fine domains of existence. By Theorem \ref{Ufinecompactexhaustion}, $U_1\cap U_2=U_r\setminus I=U$ is indeed a fine domain of existence.
\end{proof}
\section{Fine domains that are not domains of existence}\label{sec3}

\begin{prop}\label{prop1} Let $(a_n)$ be a sequence in $\CC$ that converges to $a$. Suppose that $V_n$ is a fine neighborhood of $a_n$ of the form \[V_n=V_n(a_n,h_n,r)=\{z\in B(a_n,r): h_n(z)>0\},\]
where $h_n$ is a subharmonic function on $B(a_n,r)$ such that $h_n(a_n)=1/2$ and $h_n<1$ on $B(a_n,r)$.
Then $\bigcup_{n=1}^\infty V_n$ is a (possibly deleted) fine neighborhood of $a$.
\end{prop}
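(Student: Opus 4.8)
The plan is to exhibit a single subharmonic function $w$ on a small ball about $a$ whose set of positivity agrees, outside a polar set, with $\bigcup_n V_n$, and for which $w(a)>0$; fine continuity of $w$ then does the rest.

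First I would reduce. After a translation assume $a=0$. If $a_n=a$ for some $n$ we are done, since then $a\in V_n$ and $\bigcup_m V_m$ is finely open; so assume $a_n\ne a$ for all $n$. Choose $N$ so large that $|a_n|<r/2$, hence $B(0,r/2)\subseteq B(a_n,r)$, for every $n\ge N$; then each $h_n$ with $n\ge N$ is subharmonic on $B(0,r/2)$ and there $V_n\cap B(0,r/2)=\{h_n>0\}$. Since it suffices to find a (possibly deleted) fine neighbourhood of $0$ inside $\bigcup_{n\ge N}V_n$, I discard the remaining finitely many $V_n$.

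On $B(0,r/2)$ put $W:=\sup_{n\ge N}h_n$. Because the family $\{h_n\}_{n\ge N}$ is bounded above by $1$, its upper semicontinuous regularization $w:=W^{*}$ is subharmonic on $B(0,r/2)$, and by the standard regularization theorem for upper envelopes of subharmonic functions the set $P:=\{w>W\}$ is polar. If $z\in B(0,r/2)\setminus P$ and $w(z)>0$, then $\sup_{n\ge N}h_n(z)=w(z)>0$, so $h_n(z)>0$ for some $n\ge N$ and hence $z\in V_n$; thus $\{w>0\}\cap B(0,r/2)$ and $\bigcup_{n\ge N}\bigl(V_n\cap B(0,r/2)\bigr)=\{W>0\}$ differ only by a polar subset of $P$. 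Now for every $n\ge N$ we have $w(a_n)\ge W(a_n)\ge h_n(a_n)=1/2$, and since $w$ is upper semicontinuous and $a_n\to 0$ this forces $w(0)\ge\limsup_n w(a_n)\ge 1/2>0$. Being subharmonic, $w$ is finely continuous, so $\{w>0\}$ is a finely open set containing $0$, i.e. a fine neighbourhood of $0$; removing a polar set from it leaves a fine neighbourhood of $0$ with at most the point $0$ deleted (a polar set is thin at every point, so its complement is finely open). Hence $\bigcup_{n\ge N}V_n$, a fortiori $\bigcup_n V_n$, is a (possibly deleted) fine neighbourhood of $a$.

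The one step needing care is the passage $w(a_n)\ge 1/2\Rightarrow w(0)\ge 1/2$, and this is exactly where both normalizing hypotheses enter: the bound $h_n<1$ is what makes $W^{*}$ subharmonic with polar exceptional set, while the \emph{uniform} value $h_n(a_n)=1/2$ is what survives upper semicontinuous regularization at the accumulation point $a$ — a diluted combination such as $\sum_n 2^{-n}h_n$ would only give values tending to $0$ at the $a_n$ and the argument would collapse. The remaining ingredients (subharmonicity and polarity of the exceptional set of an upper envelope, fine continuity of subharmonic functions, and the stability of fine neighbourhoods under removal of polar sets) are standard.
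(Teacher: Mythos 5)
Your proof is correct and follows essentially the same route as the paper's: form the upper envelope of the $h_n$ (for $n$ large enough that they live on a common ball around $a$), take its upper semicontinuous regularization, use upper semicontinuity together with $h_n(a_n)=1/2$ to get a strictly positive value at $a$, and discard the polar (hence finely closed) exceptional set, possibly deleting $a$ itself. The only differences are cosmetic (explicit treatment of the trivial case $a_n=a$, and naming the Brelot--Cartan regularization theorem where the paper simply asserts polarity of the exceptional set).
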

\begin{proof} Let $r_1<r$. Then there exists $n_0>0$ such that for $n\ge n_0$ the function $h_n$ is defined on $B(a,r_1)$. Let
\[h=(\sup_{n\ge n_0}\{h_n|_{B(a, r_1)}: n\ge n_0\}),\]
on $B(a, r_1)$ and  let $h^*$ denote its upper semi-continuous regularization. Then $h^*$ is subharmonic, $h^*\le 1$, and $h^*(a)\ge\limsup_{n\to \infty}h(a_n)\ge 1/2$.
Let $V_0=\{z\in B(a,r_1): h^*(z)>0\}$. Then $V_0$ is a fine neighborhood of $a$, since $h^*$ is finely continuous. Because the set $X=\{h<h^*\}\setminus\{a\}$ is polar, it is finely closed, therefore the set $V=V_0\setminus X$ is a fine neighborhood of $a$.

We claim that $V\setminus\{a\}\subset\bigcup_{n\ge n_0} V_n$. Indeed, if $z\in V\setminus \{a\}$ then $h(z)=h^*(z)>0$ hence $h_n(z)>0$ for some $n\ge n_0$, and $z\in V_n$.
\end{proof}

\begin{prop}\label{mainprop} Let $E$ be a non-empty 
polar set in $\CC$ and suppose that $E$ is of the first Baire category in its Euclidean closure $K$. Suppose that $f$ is finely holomorphic on a finely open set $V$ such that $K\setminus E\subset V$.  Then there exist a Euclidean open ball $B$ that meets $K$,  and a finely open fine neighborhood $V_1$ of $K\cap B$ such that $f$ is bounded on $V_1\setminus E$.
\end{prop}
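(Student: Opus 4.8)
The plan is to run a Baire category argument on the Euclidean closed set $K$ and then patch together local bounds for $f$ by means of Proposition \ref{prop1}. Since $K=\overline E$ is Euclidean closed in $\CC$, it is a complete metric space, hence a Baire space, and $K\ne\emptyset$ because $E\ne\emptyset$. As $f$ is finely holomorphic on $V$ and $K\setminus E\subset V$, every $x\in K\setminus E$ has a finely open fine neighbourhood $W\subset V$ on which $f$ is bounded (finely holomorphic functions being locally bounded; cf.\ Theorem \ref{finehol}). Hence, putting
\[
G_m:=\{x\in K\setminus E:\ |f|\le m\ \text{on some finely open fine neighbourhood}\ W\subset V\ \text{of}\ x\}\qquad(m\in\NN),
\]
we get $K\setminus E=\bigcup_{m\ge1}G_m$. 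Because $E$ is of the first Baire category in $K$, the set $K\setminus E$ is not of the first category in the Baire space $K$, so some $G_m$ is not of the first category in $K$; fix such an $m$. A subset of a space that is not of the first category has a closure with non-empty interior, so there is a Euclidean open ball $B$ with $\emptyset\ne K\cap B$ contained in the closure of $G_m$ in $K$. Since $B$ is Euclidean open, $G_m\cap B$ is dense in $K\cap B$.

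\emph{Patching.} Fix $a\in K\cap B$ and choose a sequence $(a_n)$ in $G_m\cap B$ with $a_n\to a$ (the constant sequence if $a\in G_m$). For each $n$, the finely open fine neighbourhood of $a_n$ provided by $a_n\in G_m$ contains, by the description of a local basis for the fine topology used in the proof of Lemma \ref{Lem1}, a fine neighbourhood of $a_n$ of the normalised form $V_n=\{z\in B(a_n,r):h_n(z)>0\}$ required in Proposition \ref{prop1}, with $h_n$ subharmonic on $B(a_n,r)$, $h_n(a_n)=\tfrac12$ and $h_n<1$; here one has to ensure that a single radius $r>0$ serves for all $n$, which is arranged by shrinking and suitably rescaling the subharmonic functions involved (for instance by working with Green potentials of measures concentrated near $a_n$). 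Since $V_n$ is contained in that neighbourhood, $V_n\subset V$ and $|f|\le m$ on $V_n$. By Proposition \ref{prop1}, $U_a:=\bigcup_n V_n$ is a (possibly deleted) fine neighbourhood of $a$; it is finely open, contained in $V$, satisfies $|f|\le m$ on it, and $U_a\cup\{a\}$ is a finely open fine neighbourhood of $a$ (every point of the finely open set $U_a$ is a fine interior point of $U_a\cup\{a\}$, and so is $a$).

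\emph{Assembling $V_1$.} Put $V_1:=\bigcup_{a\in K\cap B}(U_a\cup\{a\})$. This is finely open as a union of finely open sets, and $K\cap B\subset V_1$, so $V_1$ is a finely open fine neighbourhood of $K\cap B$ that meets $K$. To bound $f$ on $V_1\setminus E$, take $z\in V_1\setminus E$ and pick $a\in K\cap B$ with $z\in U_a\cup\{a\}$. If $z\in U_a$, then $|f(z)|\le m$. If $z=a$, then $a\notin E$, so $a\in(K\setminus E)\cap B\subset V$ and $f(a)$ is defined; since $\{a\}$, being polar, is not finely open, $a$ lies in the fine closure of the finely open set $U_a$, on which $|f|\le m$, so fine continuity of $f$ at $a$ forces $|f(a)|\le m$. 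Hence $|f|\le m$ on $V_1\setminus E$, and $B$ together with $V_1$ has the required properties.

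The hard part will be the normalisation inside the patching step: upgrading the qualitative statement ``$f$ is bounded on a fine neighbourhood of $a_n$'' to a neighbourhood of exactly the shape Proposition \ref{prop1} needs -- namely $\{h_n>0\}$ with $h_n$ subharmonic on a ball $B(a_n,r)$ of one common radius $r$ and normalised by $h_n(a_n)=\tfrac12$, $h_n<1$. Once that is in hand, the Baire dichotomy and the final verification that $V_1$ is finely open with $|f|\le m$ on $V_1\setminus E$ are routine.
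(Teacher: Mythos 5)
Your overall strategy---a Baire category argument on $K$ followed by patching with Proposition \ref{prop1}---is the same as the paper's, and your Baire step and the final assembly of $V_1$ (including the fine-continuity argument at points of $(K\setminus E)\cap B$) are correct. But there is a genuine gap precisely at the point you flag as ``the hard part'', and it cannot be repaired where you try to repair it. Proposition \ref{prop1} requires the neighbourhoods $V_n=\{z\in B(a_n,r):h_n(z)>0\}$ to have a \emph{common} radius $r$: its proof takes the upper-regularized supremum of the $h_n$ on a fixed ball $B(a,r_1)$ with $r_1<r$. This is not a mere normalisation issue. Given a finely open $W\ni x$, a basic fine neighbourhood $\{z\in B(x,\rho):h(z)>0\}\subset W$ exists only for $\rho$ small depending on $W$: a subharmonic witness of thinness of $\CC\setminus W$ at $x$ controls $\CC\setminus W$ only inside a small ball, and outside that ball the set $\{h>0\}$ may leave $W$ (and hence leave the region where $|f|\le m$). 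Since your class $G_m$ imposes no lower bound on these radii, the points $a_n\in G_m$ with $a_n\to a$ may only admit normalised neighbourhoods $V_n\subset B(a_n,\rho_n)$ with $\rho_n\to0$, and then $\bigcup_n V_n$ need not be a deleted fine neighbourhood of $a$ at all: a union of discs $B(a_n,\rho_n)$ with $\rho_n\to0$ sufficiently fast is thin at $a$ by the Wiener criterion. So without a uniform radius the conclusion of Proposition \ref{prop1} genuinely fails, and no ``shrinking and rescaling'' of the $h_n$ recovers it.

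The fix is to put the radius bound into the Baire decomposition instead of trying to retrieve it afterwards: replace $G_m$ by the sets $X_j$ consisting of those $x\in K\setminus E$ that admit a basic fine neighbourhood $U_x=\{z\in B(x,r_x):h_x(z)>0\}\subset V$ with $r_x>1/j$, $h_x(x)=1/2$, $h_x<1$, and $|f|\le j$ on $U_x$. One still has $K\setminus E=\bigcup_j X_j$, the same Baire argument yields a $j_0$ and a ball $B$ with $X_{j_0}$ dense in $K\cap B\ne\emptyset$, and Proposition \ref{prop1} now applies with $r=1/j_0$ after restricting each $h_{x_n}$ to $B(x_n,1/j_0)$. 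This is exactly what the paper does; with that change the rest of your argument goes through verbatim.
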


\begin{proof} Let $x\in V\setminus E$. Denote by $U_x$ a finely open subset of $V$ containing $x$ and having the property stated in Theorem \ref{wedgetheorem} applied to $V$. By shrinking we may arrange that $U_x$ has the form
\[U_x=U_x(x,h_x,r_x)=\{z\in B(x,r_x): h_x(z)>0\},\]
where $h_x$ is a subharmonic function on $B(x,r_x)$ such that $h_x(x)=1/2$ and $h_x<1$ on $B(x, r_x)$.

Let $X_j$ be the set of $x\in K\setminus E$ such that $r_x>1/j$, and $|f|\le j$ on $U_x$ and that $U_x$ satisfies Lemma \ref{Lem1} with $C_0=j$.
Then $K\setminus E=\bigcup_j X_j$, hence by the Baire category theorem, there exist $j_0$ and an open Euclidean ball which we may assume to be the unit disc $\DD$, such that $\emptyset\ne K \cap \DD\subset \overline X_{j_0}$ (the Euclidean closure of $X_{j_0}$).

\smallskip 
Let $w\in K\cap \DD$. Then $w=\lim_{n\to\infty} x_n$ for a sequence $(x_n)_n$ in $X_{j_0}$.  As $r_{x_n}>1/j_0$, Proposition \ref{prop1} gives us that  $\bigcup_{n}U_{x_n}$  is a (possibly deleted) fine  neighborhood of  $w$ on which $|f|\le j_0$ and $W_w:=\{w\}\cup\bigcup_{n}U_{x_n}$ is a fine neighborhood of $w$. If $w\in (K\setminus E)\cap \DD$, $|f|\le j_0$ on $W_w$. In fact, $f$ is finely holomorphic on $V$, hence $|f|$ is finely continuous on the finely open set $V\cap W_w$ which contains $w$ because $V\supset K\setminus E$. If $|f(w)|>j_0$ then $|f|>j_0$ on some fine neighborhood $Z$ of $w$, which contradicts that $Z$ meets $W_w\setminus\{w\}$ because $\mathbb C$ has no finely isolated points.

The set $V_1=\bigcup_{w\in K\cap \DD}W_w$ is a finely open fine neighborhood of $K\cap\DD$  with the property that on $|f|\le j_0$ on $V_1\setminus E$.
\end{proof}

\begin{remark} The reader should be aware that the condition \emph{$E$ is of the first Baire category in its Euclidean closure $K$} prevents sets like $E=\{1/n,n=1,2,\ldots \}$. Indeed, $E$ can not be written as countable union of nowhere dense subsets in its Euclidean closure $K=E\cup\{0\}$, because this set $E$ is relatively open in $K$. 
\end{remark}

\begin{theorem}\label{MT} Suppose that $E$ and $K$ are as in Proposition \ref{mainprop}, and that $V$ is a fine domain such that $V\cap E=\emptyset$ and $K\setminus E\subset V$. Then (1) $E\subset \partial_fV$ and (2)  $V$ is not a fine domain of existence.
\end{theorem}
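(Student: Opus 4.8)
The plan is to prove the two assertions in sequence, using Proposition \ref{mainprop} as the key input for (2) and a direct argument for (1). For (1), I would first check that $E \subset \overline{V}^f$ (the fine closure of $V$): since $K \setminus E \subset V$ and $K = \overline{E}$ (Euclidean closure), each point $a \in E$ is a Euclidean limit of points of $K \setminus E$ — here I must use that $E$ is of the first Baire category in $K$, so $E$ has no Euclidean interior relative to $K$ and in particular $K \setminus E$ is Euclidean dense in $K$; hence $a$ is in the Euclidean closure, a fortiori the fine closure, of $V$. Next, since $E$ is polar it is finely closed and finely discrete (every point of $E$ is finely isolated in $E$); combined with $V \cap E = \emptyset$, this shows no point of $E$ lies in the fine interior of $V$, hence $E \subset \overline{V}^f \setminus V \subset \partial_f V$ (using that $V$ is finely open, so $V = \fint V$).

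For (2), I would argue by contradiction: suppose $V$ is a fine domain of existence, witnessed by a finely holomorphic $f$ on $V$. By Proposition \ref{mainprop} applied to this $f$ (its hypotheses are exactly $E, K$ as given and $K \setminus E \subset V$ with $f$ finely holomorphic on $V$), there is a Euclidean open ball $B$ meeting $K$ and a finely open fine neighborhood $V_1$ of $K \cap B$ such that $f$ is bounded on $V_1 \setminus E$. Pick a point $a \in E \cap B$ — such a point exists because $B$ meets $K = \overline{E}$ and $B$ is open, so $B \cap E \neq \emptyset$. By part (1), $a \in \partial_f V$. Now I want to produce a fine domain $W$ containing $a$, with $W \subset V_1$, such that $f$ restricted to the relevant fine component of $W \cap V$ extends finely holomorphically across $a$, contradicting the defining property of the fine domain of existence. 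The natural choice: shrink $V_1$ near $a$ to a finely connected finely open set $W$ containing $a$ (possible by local connectedness, Theorem \ref{locallyconnected}), with $W \subset V_1$. On $W \cap V$ (which is finely open, contained in $V_1 \setminus E$ near $a$ since $a \notin V$ and $E$ is finely discrete so $W$ may be chosen with $W \cap E = \{a\}$), $f$ is bounded. Let $\Omega$ be a fine component of $W \cap V$ whose fine closure contains $a$ — such a component exists because $a \in \partial_f V \cap W$. Then $\Omega \cup \{a\}$ is a fine neighborhood of $a$ (or at least $\{a\}$ is a finely isolated singularity of the relevant extension), $f$ is bounded on $\Omega$, and by the removable-singularity theorem, Theorem \ref{Riemann}, $f|_\Omega$ extends finely holomorphically to $\Omega \cup \{a\}$, hence to a fine neighborhood of $a$ inside $W$. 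This contradicts that $V$ is a fine domain of existence (take $V' := W$ as the test fine domain meeting $\partial_f V$).

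The main obstacle I anticipate is the bookkeeping in the last paragraph: making precise that the fine component $\Omega$ of $W \cap V$ with $a \in \overline{\Omega}^f$ together with the single point $a$ forms a fine domain on which the extended function is finely holomorphic, and that this genuinely violates Definition \ref{Def.existence}. One subtlety is that $W \cap V$ may have several fine components clustering at $a$; I would handle this by applying the removable-singularity argument to each such component separately (each extends across $a$ since $f$ is bounded on all of $W \cap V \subset V_1 \setminus E$), and then invoking Theorem \ref{finehol}(3) or the uniqueness of finely holomorphic extension to see these extensions agree at $a$, yielding a single finely holomorphic function on $W$ extending $f|_{W \cap V}$. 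Concretely, with $V' = W$ in Definition \ref{Def.existence}: $V'$ meets $\partial_f V$ (it contains $a$), and for each fine component $\Omega$ of $V' \cap V$, $f|_\Omega$ extends finely holomorphically to $V'$ — contradiction. A secondary point to get right is the choice of $W$ so that $W \cap E = \{a\}$ and $W \subset V_1$: this follows from fine discreteness of the polar set $E$ (so some fine neighborhood of $a$ meets $E$ only in $a$) intersected with $V_1$ and with a finely connected fine neighborhood of $a$ from Theorem \ref{locallyconnected}.
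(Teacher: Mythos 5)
Your argument for part (2) follows the paper's route in all essentials: invoke Proposition \ref{mainprop} to get the ball $B$ and the fine neighborhood $V_1$ of $K\cap B$ on which $f$ is bounded off $E$, and then remove the singularity at a point of $E\cap B$ via Theorem \ref{Riemann}. The ``subtlety'' you flag at the end in fact dissolves: since $f$ is defined only on $V$, the conclusion of Proposition \ref{mainprop} that $f$ is bounded on $V_1\setminus E$ implicitly gives $V_1\setminus E\subset V$. Hence, once you choose a finely connected $W\subset V_1$ with $a\in W$ and $W\cap E=\{a\}$, you get $W\cap V=W\setminus\{a\}$, which is finely connected along with $W$ (removal of a polar set preserves fine connectedness, \cite[Theorem 12.2]{Fu}); so $W\cap V$ has exactly one fine component and Theorem \ref{Riemann} applies directly with $D=W$. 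Your localized contradiction of Definition \ref{Def.existence} is then a clean variant of the paper's global continuation of $f$ to $V\cup(E\cap B)$; there is no need to treat several components clustering at $a$.

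The genuine gap is in part (1). You assert that each $a\in E$ lies ``in the Euclidean closure, a fortiori the fine closure, of $V$''. That implication runs the wrong way: the fine topology is finer than the Euclidean one, so the fine closure of $V$ is \emph{contained in} its Euclidean closure, not the reverse. A point can be a Euclidean limit point of a set at which that set is nevertheless thin (for instance $0$ and the polar set $\{1/n:n\ge1\}$), so $a\in\overline V$ does not yield $a\in\fcl V$. What is needed --- and what the paper uses --- is the fact that a finely connected finely open set is non-thin at every point of its Euclidean closure, whence $\partial_f V=\partial V$; only with this in hand does your (correct) Baire-category argument, showing $E\subset\overline{K\setminus E}\subset\overline V$, give $E\subset\partial_f V$. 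As written, part (1) is not established. (Your part (2) survives independently, because the $W$ you construct satisfies $W\setminus\{a\}\subset V$ and $a\notin V$, which already forces $a\in\partial_f V$; but assertion (1) concerns all of $E$ and still requires the non-thinness fact.)
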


\begin{proof} 

For (1) observe that $\partial_f V=\partial V$, because $V$ is connected and therefore not thin at any of its Euclidean boundary points.
Thus it suffices to show that $E\subset \partial V$. We have $E=\bigcup_{n=1}^\infty F_n$, a countable union of nowhere dense subsets of $K$, which is of the second category in itself. Therefore, $E$ is contained in the closure of $K\setminus E$. If not, suppose to reach a contradiction, that $O$ were an open set in $K$ not meeting $K\setminus E$, then $O= \bigcup_{n=1}^\infty (F_n\cap O)$, a countable union of nowhere dense sets, contradicting that $K$ is of the second category. As $K\setminus E$ is contained in the domain $V$, we find that $E$ must be contained in the Euclidean closure of $V$ and hence in $\partial_fV=\partial V$ because  $V\cap E=\emptyset$.

For (2) let $f$ be finely holomorphic on $V$. 
By Proposition \ref{mainprop} there exist an open ball $B$ that meets $K$ (and hence $E$) and a finely open set $V_1$ containing $K\cap B$ such that $f$ is bounded on  $V_1\cap V\cap B\subset V_1\setminus E$.  
Because $E$ is polar, $V_1\cap V$ is a deleted finely open fine neighborhood of every point in $E\cap B$, hence each point $a$ of $E\cap B$ is a finely isolated singularity of $f$. Theorem \ref{Riemann} implies that  $f$ extends over each $a\in E\cap B$  and hence has a finely holomorphic continuation  to the finely open set $V\cup[(V\cup E)\cap B]=V\cup(E\cap B)$, which contains $V$ properly because $(E\cap B)\subset \partial_fV$.
\end{proof}

\begin{example}\label{exa3.5} Let $V=\CC\setminus \QQ$, $E=\QQ$, and $K=\RR$. Then $V$, $E$, and $K$ satisfy the conditions of Theorem \ref{MT}, therefore $\CC\setminus \QQ$ is not a domain of existence for finely holomorphic functions. 

Similarly  $\CC\setminus (\QQ\oplus i\QQ)$ is not a domain of existence for finely holomorphic functions.  

\end{example}

\begin{question}From  Theorem \ref{MT} and Example \ref{exa3.5} it is clear that the requirement that $U$ be an $F_\sigma$ in Theorem  \ref{Ufinecompactexhaustion} can not be dropped. 
But is it perhaps true that every  fine domain that is a Euclidean  $F_\sigma$ is a fine domain of existence? It would suffice to prove this for $U=\mathbb C\setminus I$ with $I$ a polar $G_\delta$, see the proof of Theorem \ref{Thm2}.
\end{question}

\bibliographystyle{amsplain}

\end{document}